\theoremstyle{definition}
\newtheorem{thm}{Theorem}[section]
\newtheorem{cor}{Corollary}[section]
\newtheorem{lem}{Lemma}[section]
\newtheorem{defi}{Definition}[section]
\newcommand{\be}{\begin{equation}}
	\newcommand{\ee}{\end{equation}}
\newcommand{\beas}{\begin{eqnarray*}}
	\newcommand{\eeas}{\end{eqnarray*}}
\newcommand{\bea}{\begin{eqnarray}}
	\newcommand{\eea}{\end{eqnarray}}
\numberwithin{equation}{section}
\begin{document}
\setcounter{page}{1}
\title[Coefficient bounds for starlike functions with Gregory coefficients]{Coefficient bounds for starlike functions associated with Gregory coefficients}

\author{Molla Basir Ahamed}
\address{Molla Basir Ahamed, Department of Mathematics, Jadavpur University, Kolkata-700032, West Bengal, India.}
\email{mbahamed.math@jadavpuruniversity.in}

\author{Sanju Mandal}
\address{Sanju Mandal, Department of Mathematics, Jadavpur University, Kolkata-700032, West Bengal, India.}
\email{sanjum.math.rs@jadavpuruniversity.in, sanju.math.rs@gmail.com}
	
\subjclass[2020]{Primary 30C45; Secondary 30C50, 30C80}
\keywords{Univalent functions, Starlike functions, Gregory coefficients, Hankel determinants, Logarithmic coefficients, Zalcman functional, Fekete-Szego inequality}
\begin{abstract} 
 It is of interest to know the sharp bounds of the Hankel determinant, Zalcman functionals, Fekete–Szeg$ \ddot{o} $ inequality as a part of coefficient problems for different classes of functions. Let $\mathcal{H}$ be the class of functions $ f $ which are holomorphic in the open unit disk $\mathbb{D}=\{z\in\mathbb{C}: |z|<1\}$ of the form 
 \begin{align*}
 	f(z)=z+\sum_{n=2}^{\infty}a_nz^n\; \mbox{for}\; z\in\mathbb{D}
 \end{align*}
 and suppose that
 \begin{align*}
 	F_{f}(z):=\log\dfrac{f(z)}{z}=2\sum_{n=1}^{\infty}\gamma_{n}(f)z^n, \;\; z\in\mathbb{D},\;\;\log 1:=0,
 \end{align*}
 where $ \gamma_{n}(f) $ is the logarithmic coefficients. The second Hankel determinant of logarithmic coefficients $H_{2,1}(F_{f}/2)$ is defined as: $H_{2,1}(F_{f}/2) :=\gamma_{1}\gamma_{3} -\gamma^2_{2}$, where $\gamma_1, \gamma_2,$ and $\gamma_3$ are the first, second and third logarithmic coefficients of functions belonging to the class $\mathcal{S}$ of normalized univalent functions. In this article, we first establish sharp inequalities $|H_{2,1}(F_{f}/2)|\leq 1/64$ with logarithmic coefficients for the classes of starlike functions associated with Gregory coefficients. In addition, we establish the sharpness of Fekete–Szeg$ \ddot{o} $ inequality, Zalcman functional and generalized Zalcman functional for the class starlike functions associated with Gregory coefficients.
\end{abstract} \maketitle
	
\section{\bf Introduction}
Let $\mathcal{H}$ be the class of holomorphic functions $f$ in the open unit disk $\mathbb{D}=\{z\in\mathbb{C}: |z|<1\}$. Then $\mathcal{H}$ is a locally convex topological vector space endowed with the topology of uniform convergence over compact subsets of $\mathbb{D}$. Let $\mathcal{A}$ denote the class of functions $f\in\mathcal{H}$ such that $f(0)=0$ and $f^{\prime}(0)=1$ \textit{i.e}, the function $f$ is of the form
\begin{align}\label{eq-1.1}
	f(z)=z+ \sum_{n=2}^{\infty}a_nz^n,\; \mbox{for}\; z\in\mathbb{D}.
\end{align} 
Let $\mathcal{S}$ denote the subclass of all functions in $\mathcal{A}$ which are univalent. For a comprehensive understanding of univalent functions and their significance in coefficient problems, readers are referred to the following books \cite{Duren-1983-NY,Goodman-1983}.

\begin{defi}\label{def-1.1}
	Let $f$ and $g$ be two analytic functions in $\mathbb{D}$. Then $f$ is said to be subordinate to $g$, written as $f\prec g$ or $f(z)\prec g(z)$, if there exists a function $\omega$, analytic in $\mathbb{D}$ with $w(0)=0$, $|w(z)|<1$ such that $f(z)=g(w(z))$ for $z\in\mathbb{D}$. Moreover, if $g$ is univalent in $\mathbb{D}$ and $f(0)=g(0)$, then $f(\mathbb{D})\subseteq g(\mathbb{D})$.
\end{defi}

In \cite{Ma-Minda-1994}, Ma and Minda gave a unified presentation of various subclasses of starlike and convex functions by replacing the subordinate function $(1+z)/(1-z)$ by a more general analytic function $\varphi$ with positive real part and normalized by the conditions $\varphi(0) =1$, $\varphi^{\prime}(0)> 0$ and $\varphi$ maps $\mathbb{D}$ onto univalently a starlike region with respect to $1$ and symmetric with respect to the real axis. They have introduced a general class that envelopes several well-known classes as special cases
\begin{align*}
	\mathcal{S}^*[\varphi]=\{f\in\mathcal{A}: zf^{\prime}(z)/f(z)\prec\varphi(z)\}.
\end{align*} In the literature, functions belonging to the class $\mathcal{S}^*[\varphi]$ are known as the Ma-Minda starlike functions. For $-1\leq B<A\leq 1$, the class $\mathcal{S}^*[(1+Az)/(1+Bz)]:= \mathcal{S}^*[A,B]$
is called the class of Janowski starlike functions, introduced by Janowski in \cite{Janowski-APM-1973}. The class $\mathcal{S}^*[\beta]$ of starlike functions of order $\beta $, where $ 0\leq\beta<0 $, is defined by taking $\varphi(z)= (1+(1-2\beta)z)/(1-z)$. Note that $\mathcal{S}^*=\mathcal{S}^*[0]$ is the classical class of starlike functions. By taking 
\begin{align*}
	\varphi(z)= 1+ \frac{2}{\pi^2}\left(\log\left((1+\sqrt{z})/(1-\sqrt{z})\right)\right)^2,
\end{align*} we obtain the class $\mathcal{S}^*[\varphi]=\mathcal{S}_p$ of parabolic starlike functions, introduced in \cite{Rønning-PAMS-1993}. \vspace{1.2mm}

Recently, the coefficient problem and many other geometric properties for class $\mathcal{S}^*[\varphi]$ are studied extensively in \cite{Deniz-BMMSS-2021,Riza-Raza-Thomas-FM-2022, Goodman-APM-1991,Kazimoglu-Deniz-Srivastava-COAT-2024}. For example, in \cite{Deniz-BMMSS-2021}, Deniz has studied sharp coefficient problem for the function
\begin{align*}
	\varphi(z)=e^{z+\frac{\lambda}{2}z^2} \;\;(z\in\mathbb{C}, \lambda\geq 1), 
\end{align*}
which is a generated function of generalized telephone numbers. In \cite{Mendiratta-Nagpal-Ravichandran-BMMSS-2015}, Mendiratta \textit{et al.}  obtained the structural formula, inclusion relations, coefficient estimates, growth and distortion results, subordination theorems and various radii constants for the exponential function $ 	\varphi(z)= e^z. $\vspace{1.2mm}

It is worth pointing out that, the case when $\varphi$ defined by
\begin{align*}
	\varphi(z)=\frac{2}{1+e^{-z}} 
\end{align*}
is a modified sigmoid function which maps the unit disk $\mathbb{D}$ onto the domain
\begin{align*}
	\Delta_{SG}=\left\{z\in\mathbb{C}\;:\;\vline\log\left(\frac{z}{2-z}\right)\vline< 1\right\}.
\end{align*}
Sharp coefficient problem is studied for this function by Riza \textit{et al.} in \cite{Riza-Raza-Thomas-FM-2022}, Goel and Kumar in \cite{Goel-Kumar-BMMSS-2020}. In \cite{Kazimoglu-Deniz-Srivastava-COAT-2024}, Kazımoglu \textit{et al.} have considered the function $\varphi$ for which $\varphi(\mathbb{D})$ is starlike with respect to $1$ and whose coefficients is the Gregory coefficients. Gregory
coefficients are decreasing rational numbers of the form $1/2, 1/12, 1/24, 19/720, \ldots$, which are similar in their role to the Bernoulli numbers and appear in a large number of problems, especially in those related to the numerical analysis and to the number theory. They first appeared in the works of the Scottish mathematician James Gregory in 1671 and have since been rediscovered numerous times. Among the renowned mathematicians who rediscovered them are Laplace, Mascheroni, Fontana, Bessel, Clausen, Hermite, Pearson, and Fisher. Furthermore, Gregory's coefficients can rightfully be considered among the most frequently rediscovered entities in mathematics, with the most recent rediscovery dating back to our century. This frequent rediscovery has led to various names for them in literature, such as reciprocal logarithmic numbers, Bernoulli numbers of the second kind, Cauchy numbers, and more. Consequently, their authorship is often attributed to various mathematicians.\vspace{1.2mm}

In \cite{Berezin-Zhidkov-1965,Phillips-AMM-1972}, the authors have considered the generating function of the Gregory coefficients $G_n$  as follows:
\begin{align*}
	\frac{z}{\ln(1+z)}=\sum_{n=0}^{\infty} G_n z^n\; \mbox{for}\; z\in\mathbb{D}.
\end{align*}
\begin{figure}[!htb]
	\begin{center}
		\includegraphics[width=0.85\linewidth]{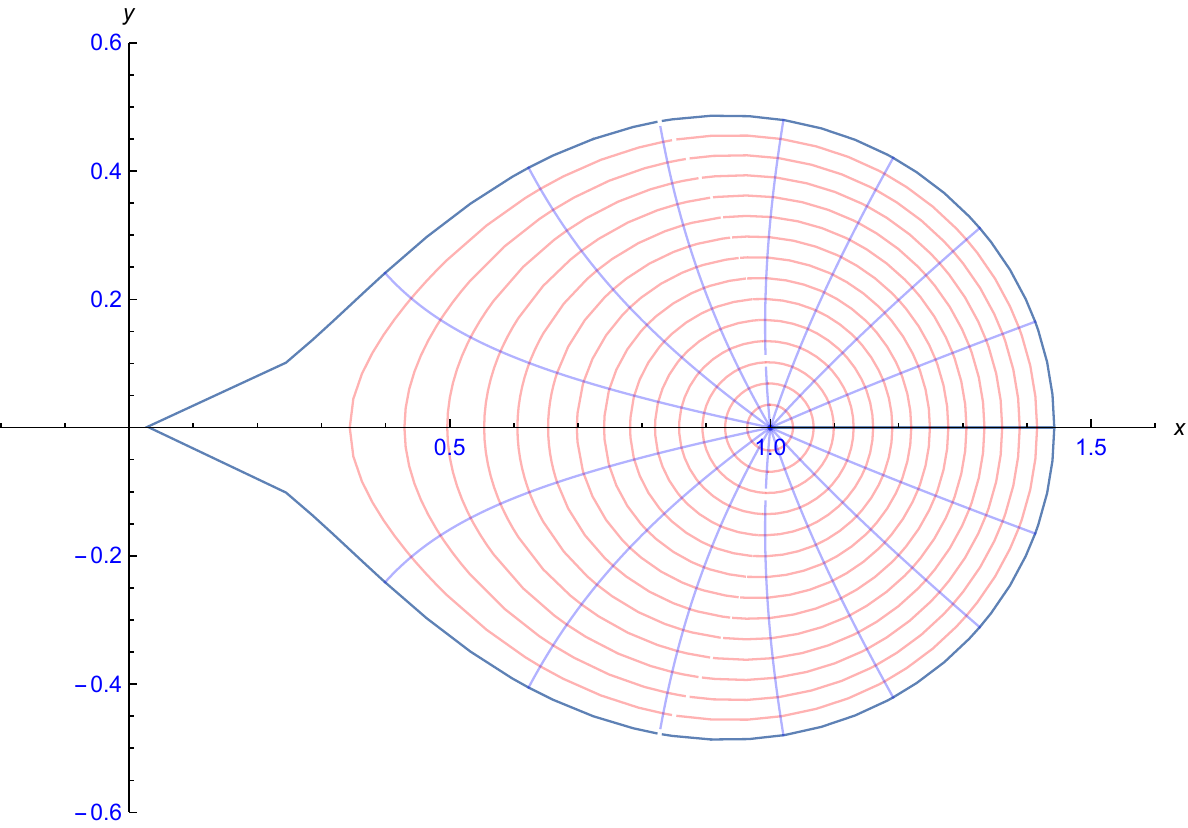}
	\end{center}
	\caption{The image $\Psi(\mathbb{D})$ which is starlike with respect to $1$.}
\end{figure}
For first few positive integers, the reader may check that $G_0=1$, $G_1=\frac{1}{2}$, $G_2=-\frac{1}{12}$, $G_3=\frac{1}{24}$, $G_4=-\frac{19}{720}$, $G_5= \frac{3}{160}$ and $G_6=-\frac{863}{60480}$. In \cite{Kazimoglu-Deniz-Srivastava-COAT-2024}, Kazımoglu \textit{et al.} have considered the function $\Psi(z):=\frac{z}{\ln(1+z)}$ with its domain of definition as the open unit disk $\mathbb{D}$. With this function, we define the class
\begin{align*}
	S^*_G:=\{f: f\in\mathcal{A} \;\;\mbox{and}\;\; zf^{\prime}(z)/f(z) \prec\Psi(z)\}.
\end{align*}
Finding the upper bound for coefficients has been one of the central topics of research in geometric function theory as it gives several properties of functions. The main hurdle lies in finding a suitable function from the class that convincingly exhibits the sharpness of the bound.  Keeping in mind such hurdle, we investigate the sharp bound of several problems in geometric function theory such as finding sharp bound of Hankel determinant of logarithmic coefficients, sharpness of the Fekete-Szeg$ \ddot{o} $ inequality, Zalcman functionals. In the subsequent sections, we discuss our findings and background study of individuals.
\section{\bf Sharp bound of a Hankel determinant of logarithmic coefficients for functions in the class $\mathcal{S}^*_G$}
For $f\in\mathcal{S}$, we define
\begin{align}\label{eq-2.1}
	F_{f}(z):=\log\dfrac{f(z)}{z}=2\sum_{n=1}^{\infty}\gamma_{n}(f)z^n, \;\; z\in\mathbb{D},\;\;\log 1:=0,
\end{align}
a logarithmic function associated with $f\in\mathcal{S}$. The numbers $\gamma_{n}:=\gamma_{n}(f)$ ($ n\in\mathbb{N} $) are called the logarithmic coefficients of $f$. Although the logarithmic coefficients $\gamma_{n}$ play a critical role in the theory of univalent functions, it appears that only a limited number of sharp bounds have been established for them. As is well known, the logarithmic coefficients play a crucial role in Milin’s conjecture (\cite{Milin-1977-ET}, see also \cite[p.155]{Duren-1983-NY}).
Milin conjectured that for $f\in\mathcal{S}$ and $n\geq2$,
\begin{align*}
	\sum_{m=1}^{n}\sum_{k=1}^{m}\left(k|\gamma_{k}|^2 -\frac{1}{k}\right)\leq 0,
\end{align*}
where the equality holds if, and only if, $f$ is a rotation of the Koebe function. De Branges \cite{Branges-AM-1985} proved Milin conjecture which confirmed the famous Bieberbach conjecture. On the other hand, one of reasons for more attention has been given to the logarithmic coefficients is that the sharp bound for the class $\mathcal{S}$ is known only for $\gamma_{1}$ and $\gamma_{2}$, namely
\begin{align*}
	|\gamma_{1}|\leq 1, \;\; |\gamma_{2}|\leq \dfrac{1}{2}+ \dfrac{1}{e} =0.635\ldots
\end{align*}
It is still an open problem to find the sharp bounds of $\gamma_{n}$, $n\geq 3$, for the class $\mathcal{S}$. Estimating the modulus of logarithmic coefficients for $f\in\mathcal{S}$ and various sub-classes has been considered recently by several authors. For more information on the topic, we recommend consulting the articles \cite{Ali-Allu-PAMS-2018, Roth-PAMS-2007, Ali-Allu-Thomas-CRMCS-2018, Cho-Kowalczyk-kwon-Lecko-Sim-RACSAM-2020,Girela-AASF-2000,Thomas-PAMS-2016} and references therein.\vspace{2mm}

The evaluation of Hankel determinants has been a major concern in geometric function theory, where these determinants are formed by employing coefficients of analytic functions $f$ that are characterized by \eqref{eq-1.1} and defined within the region $\mathbb{D}$. Hankel matrices (and determinants) have emerged as fundamental elements in different areas of mathematics, finding a wide array of applications (see \cite{Ye-Lim-FCM-2016}). The primary objective of this study is to determine the sharp bound for the Hankel determinants, which involves the use of logarithmic coefficients. To begin, we present the definitions of Hankel determinants in situations where $f\in \mathcal{A}$.\vspace{1.2mm}

The Hankel determinant $H_{q,n}(f)$ of Taylor's coefficients of functions $f\in\mathcal{A}$ represented by \eqref{eq-1.1} is defined for $q,n\in\mathbb{N}$ as follows:
\begin{align*}
	H_{q,n}(f):=\begin{vmatrix}
		a_{n} & a_{n+1} &\cdots& a_{n+q-1}\\ a_{n+1} & a_{n+2} &\cdots& a_{n+q} \\ \vdots & \vdots & \vdots & \vdots \\ a_{n+q-1} & a_{n+q} &\cdots& a_{n+2(q-1)}
	\end{vmatrix}.
\end{align*}
The extensive exploration of sharp bounds of the Hankel determinants for starlike, convex, and other function classes has been undertaken in various studies (see \cite{Ponnusamy-Sugawa-BDSM-2021, Kowalczyk-Lecko-RACSAM-2023, Raza-Riza-Thomas-BAMS-2023, Sim-Lecko-Thomas-AMPA-2021, Kowalczyk-Lecko-BAMS-2022,Mandal-Ahamed-LMJ-2024}), and their precise bounds have been successfully established. \vspace{1.2mm}

Differentiating \eqref{eq-2.1} and using \eqref{eq-1.1}, a simple computation shows that 
\begin{align}\label{Eq-2.2}
	\begin{cases}
		\gamma_{1}=\dfrac{1}{2}a_{2},\vspace{1.5mm}\\ \gamma_{2}=\dfrac{1}{2} \left(a_{3} -\dfrac{1}{2}a^2_{2}\right), \vspace{1.5mm}\\ \gamma_{3} =\dfrac{1}{2}\left(a_{4}- a_{2}a_{3} +\dfrac{1}{3}a^3_{2}\right) \vspace{1.5mm}.
	\end{cases}
\end{align}
In \cite{Kowalczyk-Lecko-BAMS-2022}, Kowalczyk and Lecko  proposed a Hankel determinant $H_{q,n}(F_f/2)$ whose elements are the logarithmic coefficients of $f\in\mathcal{S}$, realizing the extensive use of these coefficients. Thus, it follows that
\begin{align}\label{eq-2.2}
	H_{2,1}(F_f/2):=\gamma_1\gamma_3-\gamma_2^2=\frac{1}{48}\left(a_2^4-12a_3^2+12a_2a_4\right).
\end{align}
Furthermore, $H_{2,1}(F_{f}/2)$ is invariant under rotation, since for $f_{\theta}(z):=e^{-i\theta}f(e^{i\theta}z)$, $\theta\in\mathbb{R}$ when $f\in\mathcal{S}$, we have
\begin{align*}
	H_{2,1}(F_{f_{\theta}}/2)=\frac{e^{4i\theta}}{48}\left(a^4_2 - 12 a^2_3 + 12 a_2 a_4\right)=e^{4i\theta}H_{2,1}(F_{f}/2).
\end{align*}
The class $\mathcal{P}$ of all analytic functions $p$ in $\mathbb{D}$ satisfying $p(0)=1$ and $\mbox{Re}\;p(z)>0$ for $z\in\mathbb{D}$. Thus, every $p\in\mathcal{P}$ can be represented as
\begin{align}\label{eq-2.3}
	p(z)=1+\sum_{n=1}^{\infty}c_n z^n,\; z\in\mathbb{D}.
\end{align}
Elements of the class $\mathcal{P}$ are called  Carath$\acute{e}$odory functions. It is known that $|c_n|\leq 2$, $n\geq 1$ for a function $p\in\mathcal{P}$ (see \cite{Duren-1983-NY}). The Carath$\acute{e}$odory class $\mathcal{P}$ and it's coefficients bound plays a significant roles in establishing the bound of Hankel determinants.\vspace{2mm}

In this section, we provide important lemmas \emph{i.e.,} Lemma \ref{lem-2.1} and Lemma \ref{lem-2.2}, that will be utilized to establish the main result. Parametric representations of the coefficients are often useful in finding the bound of Hankel determinants, and in this regard, Libera and Zlotkiewicz (see \cite{Libera-Zlotkiewicz-PAMS-1982, Libera-Zlotkiewicz-PAMS-1983}) obtained the parameterizations of possible values of $c_2$ and $c_3$.
\begin{lem}(\cite{Libera-Zlotkiewicz-PAMS-1982,Libera-Zlotkiewicz-PAMS-1983})\label{lem-2.1}
	If $p\in\mathcal{P}$ is of the form \eqref{eq-2.3} with $c_1\geq 0$, then 
	\begin{align}\label{eq-2.4}
		&c_1=2\tau_1,\\\label{eq-2.5} &c_2=2\tau^2_1 +2(1-\tau^2_1)\tau_2
	\end{align}
	and
	\begin{align}\label{eq-2.6}
		c_3= 2\tau^3_1  + 4(1 -\tau^2_1)\tau_1\tau_2 - 2(1 - \tau^2_1)\tau_1\tau^2_2 + 2(1 - \tau^2_1)(1 - |\tau_2|^2)\tau_3
	\end{align}
	for some $\tau_1\in[0,1]$ and $\tau_2,\tau_3\in\overline{\mathbb{D}}:= \{z\in\mathbb{C}:|z|\leq 1\}$.\vspace{1.2mm}
	
	For $\tau_1\in\mathbb{T}:=\{z\in\mathbb{C}:|z|=1\}$, there is a unique function $p\in\mathcal{P}$ with $c_1$ as in \eqref{eq-2.4}, namely
	\begin{align*}
		p(z)=\frac{1+\tau_1 z}{1-\tau_1 z}, \;\;z\in\mathbb{D}.
	\end{align*}
	
	For $\tau_1\in\mathbb{D}$ and $\tau_2\in\mathbb{T}$, there is a unique function $p\in\mathcal{P}$ with $c_1$ and $c_2$ as in \eqref{eq-2.4} and \eqref{eq-2.5}, namely
	\begin{align*}
		p(z)=\frac{1+(\overline{\tau_1}\tau_2 +\tau_1)z+\tau_2 z^2}{1 +(\overline{\tau_1}\tau_2 -\tau_1)z-\tau_2 z^2}, \;\;z\in\mathbb{D}.
	\end{align*}
	
	For $\tau_1,\tau_2\in\mathbb{D}$ and $\tau_3\in\mathbb{T}$, there is a unique function $p\in\mathcal{P}$ with $c_1,c_2$ and $c_3$ as in \eqref{eq-2.4}--\eqref{eq-2.6}, namely
	\begin{align*}
		p(z)=\frac{1+(\overline{\tau_2}\tau_3+\overline{\tau_1}\tau_2 +\tau_1)z +(\overline{\tau_1}\tau_3+ \tau_1\overline{\tau_2}\tau_3 +\tau_2)z^2 +\tau_3 z^3}{1 +(\overline{\tau_2}\tau_3+ \overline{\tau_1}\tau_2 -\tau_1)z +(\overline{\tau_1}\tau_3- \tau_1\overline{\tau_2}\tau_3 -\tau_2)z^2 -\tau_3 z^3}, \;\;z\in\mathbb{D}.
	\end{align*}
\end{lem}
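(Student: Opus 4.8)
\emph{Proof proposal.} The plan is to convert the Carath\'eodory condition into a contractive (Schur) condition and then run the Schur algorithm, reading off $c_1,c_2,c_3$ from the successive Schur parameters. First I would observe that $p\in\mathcal{P}$ with $p(0)=1$ is equivalent to the function $\omega(z):=(p(z)-1)/(p(z)+1)$ being analytic with $\omega(0)=0$ and $|\omega(z)|<1$ on $\mathbb{D}$, because the M\"obius map $w\mapsto(w-1)/(w+1)$ carries the right half-plane onto $\mathbb{D}$. By the Schwarz lemma we may write $\omega(z)=z\,g(z)$ for some analytic $g\colon\mathbb{D}\to\overline{\mathbb{D}}$, so that $p(z)=(1+z\,g(z))/(1-z\,g(z))$, i.e. $p(z)-1=z\,g(z)\,(p(z)+1)$. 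Expanding $g(z)=d_0+d_1z+d_2z^2+\cdots$ and comparing coefficients in this identity yields $c_1=2d_0$, $c_2=2d_1+2d_0^2$, and $c_3=2d_2+4d_0d_1+2d_0^3$.

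Next I would apply the Schur algorithm to $g$. Setting $\tau_1:=g(0)=d_0$ and $g_1(z):=\big(g(z)-\tau_1\big)/\big(z\,(1-\overline{\tau_1}\,g(z))\big)$ produces again a function mapping $\mathbb{D}$ into $\overline{\mathbb{D}}$, with parameter $\tau_2:=g_1(0)$; one further step gives $g_2$ and $\tau_3:=g_2(0)$, and all $\tau_j\in\overline{\mathbb{D}}$. The normalization $c_1\geq 0$ forces $\tau_1=d_0=c_1/2\in[0,1]$, which is the interval stated in the lemma. Computing the first two Taylor coefficients of $g_1$ (and then of $g_2$) expresses $d_1$ and $d_2$ through the $\tau_j$: a short calculation with the quotient defining $g_1$ gives $d_1=(1-\tau_1^2)\tau_2$, and carrying one more coefficient through gives $d_2=(1-\tau_1^2)(1-|\tau_2|^2)\tau_3-\tau_1(1-\tau_1^2)\tau_2^2$. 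Substituting these into the relations for $c_1,c_2,c_3$ above reproduces exactly \eqref{eq-2.4}, \eqref{eq-2.5} and \eqref{eq-2.6}.

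Finally, for the uniqueness assertions in the boundary cases I would invoke the rigidity of the Schur algorithm. If $|\tau_1|=1$, then $g$ is a self-map of $\mathbb{D}$ whose value $g(0)=\tau_1$ lies on $\mathbb{T}$, so by the maximum principle $g\equiv\tau_1$ and hence $p(z)=(1+\tau_1 z)/(1-\tau_1 z)$. If $\tau_1\in\mathbb{D}$ but $|\tau_2|=1$, the same argument applied to $g_1$ forces $g_1\equiv\tau_2$; inverting the Schur step via the M\"obius relation $g_{k-1}(z)=\big(z\,g_k(z)+\tau_k\big)/\big(1+\overline{\tau_k}\,z\,g_k(z)\big)$ recovers $g$ uniquely, and substituting back into $p=(1+zg)/(1-zg)$ yields the displayed second-order rational function. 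The case $\tau_1,\tau_2\in\mathbb{D}$, $|\tau_3|=1$ is identical with one additional inversion, giving the stated third-order function; in each instance $p$ is uniquely determined.

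I expect the main obstacle to be the coefficient bookkeeping in the second paragraph, namely extracting $d_1$ and $d_2$ from the quotients defining $g_1$ and $g_2$: this requires differentiating ratios of power series and tracking the conjugates $\overline{\tau_j}$ correctly. Once $d_1$ and $d_2$ are in hand, the substitution into the relations for $c_1,c_2,c_3$ is purely mechanical, and the boundary-case uniqueness follows from the standard Schwarz/maximum-principle rigidity.
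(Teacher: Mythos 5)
Your proposal cannot be checked against the paper's argument for the simple reason that the paper gives none: Lemma \ref{lem-2.1} is quoted as a known result of Libera and Zlotkiewicz \cite{Libera-Zlotkiewicz-PAMS-1982,Libera-Zlotkiewicz-PAMS-1983} (with the uniqueness clauses coming from later literature), so your proof must be judged on its own, and it is correct. The Cayley transform plus Schwarz lemma gives $p=(1+zg)/(1-zg)$ with $g\colon\mathbb{D}\to\overline{\mathbb{D}}$, and your bookkeeping is right: $c_1=2d_0$, $c_2=2d_1+2d_0^2$, $c_3=2d_2+4d_0d_1+2d_0^3$, while the Schur steps give $d_0=\tau_1$, $d_1=(1-|\tau_1|^2)\tau_2$, $d_2=(1-|\tau_1|^2)\big((1-|\tau_2|^2)\tau_3-\overline{\tau_1}\tau_2^2\big)$; since $c_1\geq 0$ forces $\tau_1\in[0,1]$, substitution yields exactly \eqref{eq-2.4}--\eqref{eq-2.6}, and the maximum-principle rigidity in the boundary cases reproduces the three displayed extremal functions (e.g. in the third case $g(z)=\big(\tau_1+(\tau_2+\tau_1\overline{\tau_2}\tau_3)z+\tau_3z^2\big)/\big(1+(\overline{\tau_2}\tau_3+\overline{\tau_1}\tau_2)z+\overline{\tau_1}\tau_3z^2\big)$, which under $p=(1+zg)/(1-zg)$ gives precisely the stated rational function). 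Two small points would tighten the write-up. First, the existence part should note the degenerate cases: if $|\tau_1|=1$ then $g\equiv\tau_1$ and $g_1$ is not defined, but \eqref{eq-2.5}--\eqref{eq-2.6} then hold with $\tau_2,\tau_3\in\overline{\mathbb{D}}$ arbitrary because $1-\tau_1^2=0$; similarly when $|\tau_2|=1$ the parameter $\tau_3$ drops out. Second, in the uniqueness clauses $\tau_1$ ranges over the complex disk, where the Schur parametrization produces $1-|\tau_1|^2$ rather than the $1-\tau_1^2$ displayed in \eqref{eq-2.5}--\eqref{eq-2.6}; this quirk is inherited from the lemma's statement (which is applied in the paper only with $\tau_1\in[0,1]$), and your proof handles it correctly by carrying the conjugates $\overline{\tau_j}$. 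For comparison, the classical derivation in the cited sources obtains \eqref{eq-2.5} and \eqref{eq-2.6} from the Carath\'{e}odory--Toeplitz positivity conditions on the coefficient body of $\mathcal{P}$; your Schur-algorithm route is more constructive and has the advantage that the uniqueness assertions come essentially for free, since the boundary cases $|\tau_k|=1$ are exactly the cases where the algorithm terminates in a constant and hence in a uniquely determined $p$.
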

\begin{lem}(\cite{Cho-Kim-Sugawa-JMSJ-2007})\label{lem-2.2}
	Let $A, B, C$ be real numbers and
	\begin{align*}
		Y(A,B,C):=\max\{|A+ Bz +Cz^2| +1-|z|^2: z\in\overline{\mathbb{D}}\}.
	\end{align*}
	\noindent{(i)} If $AC\geq 0$, then
	\begin{align*}
		Y(A,B,C)=\begin{cases}
			|A|+|B|+|C|, \;\;\;\;\;\;\;\;\;\;\;\;\;|B|\geq 2(1-|C|), \vspace{2mm}\\ 1+|A|+\dfrac{B^2}{4(1-|C|)}, \;\;\;\;\;|B|< 2(1-|C|).
		\end{cases}
	\end{align*}
	\noindent{(ii)} If $AC<0$, then
	\begin{align*}
		Y(A,B,C)=\begin{cases}
			1-|A|+\dfrac{B^2}{4(1-|C|)}, \;\;\;\;-4AC(C^{-2}-1)\leq B^2\land|B|< 2(1-|C|),\vspace{2mm} \\ 1+|A|+\dfrac{B^2}{4(1+|C|)}, \;\;\;\; B^2<\min\{4(1+|C|)^2,-4AC(C^{-2}-1)\}, \vspace{2mm} \\ R(A,B,C), \;\;\;\;\;\;\;\;\;\;\;\;\;\;\;\;\;\;\; otherwise,
		\end{cases}
	\end{align*}
	where
	\begin{align*}
		R(A,B,C):= \begin{cases}
			|A|+|B|-|C|, \;\;\;\;\;\;\;\;\;\;\;\;\; |C|(|B|+4|A|)\leq |AB|, \vspace{2mm}\\ -|A|+|B|+|C|, \;\;\;\;\;\;\;\;\;\;\; |AB|\leq |C|(|B|-4|A|), \vspace{2mm}\\ (|C| +|A|)\sqrt{1-\dfrac{B^2}{4AC}}, \;\;\; otherwise.
		\end{cases}
	\end{align*}
\end{lem}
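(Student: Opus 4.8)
The plan is to treat $Y(A,B,C)$ as a genuine two-variable extremal problem over the polar coordinates of $z=re^{i\theta}$, $r\in[0,1]$, $\theta\in[0,2\pi)$, and to reduce the inner maximization in $\theta$ to a one-variable quadratic. Writing $t=\cos\theta$ and using that $A,B,C$ are real, a direct expansion (via $\cos\theta\cos2\theta+\sin\theta\sin2\theta=\cos\theta$ and $\cos2\theta=2t^2-1$) gives
\[
|A+Bz+Cz^2|^2=4ACr^2\,t^2+2Br(A+Cr^2)\,t+\bigl(A^2+B^2r^2+C^2r^4-2ACr^2\bigr),
\]
a quadratic in $t\in[-1,1]$. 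The crucial observation is that its leading coefficient is $4ACr^2$, so the sign of $AC$ decides whether this parabola is convex or concave; this is exactly the source of the dichotomy $AC\ge 0$ versus $AC<0$ in the statement.

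First I would dispose of the case $AC\ge 0$. Here the parabola in $t$ opens upward, so its maximum on $[-1,1]$ occurs at an endpoint $t=\pm1$, i.e. at $z=\pm r$ real, where the bracket becomes $(A\pm Br+Cr^2)^2$. Since $\max(|A+Cr^2+Br|,|A+Cr^2-Br|)=|A+Cr^2|+|B|r=|A|+|C|r^2+|B|r$ (the last equality using $AC\ge 0$), the problem collapses to maximizing the single-variable function $1+|A|+|B|r+(|C|-1)r^2$ over $r\in[0,1]$. Assuming $|C|<1$, as is implicit in the formulas, this is concave in $r$ with unconstrained maximizer $r_0=|B|/(2(1-|C|))$; comparing $r_0$ with $1$ gives precisely the threshold $|B|\ge 2(1-|C|)$ versus $|B|<2(1-|C|)$ and yields the two stated values $|A|+|B|+|C|$ (maximizer at $r=1$) and $1+|A|+B^2/(4(1-|C|))$ (interior maximizer).

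The substantive work is the case $AC<0$, where the parabola in $t$ is concave and may be maximized at the interior vertex $t_0=-B(A+Cr^2)/(4ACr)$ rather than at an endpoint. Here one must carry out a genuine two-dimensional analysis: for each admissible configuration one decides whether the optimal $t$ is interior or lies in $\{-1,1\}$ and, simultaneously, whether the optimal $r$ is interior or at $r=1$, and then optimizes the resulting expression in $r$. The interior-$t$ branch produces a term of the form $B^2/(4(1\pm|C|))$ after substituting $t_0$ (via the vertex value $\text{const}-b^2/(4a)$) and simplifying, while the boundary branches feed into the auxiliary quantity $R(A,B,C)$; the nested thresholds $-4AC(C^{-2}-1)\le B^2$ and $B^2<4(1+|C|)^2$ arise as the feasibility condition $|t_0|\le 1$ together with the comparisons that determine which branch is largest.

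The main obstacle is precisely this $AC<0$ analysis: unlike the convex case, the $\theta$-optimum and the $r$-optimum are coupled, so one cannot optimize one variable and then the other independently. Tracking all sub-regimes -- interior versus boundary in each variable -- and verifying that the competing candidate values patch together into the three-way formula for $R(A,B,C)$ is the delicate bookkeeping that the statement compresses into its piecewise form. My strategy would be to record the candidate value in each corner configuration, then eliminate dominated candidates using the stated inequalities, which I expect reproduces the lemma exactly.
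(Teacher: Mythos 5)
You should know at the outset that the paper does not prove this lemma at all: it is quoted verbatim from Cho, Kim and Sugawa \cite{Cho-Kim-Sugawa-JMSJ-2007} and used as a black box, so there is no in-paper proof to compare against; your proposal has to stand on its own. Judged that way, your reduction is the right starting point and is indeed the standard one: with $z=re^{i\theta}$, $t=\cos\theta$, the expansion
\[
|A+Bz+Cz^2|^2=4ACr^2t^2+2Br(A+Cr^2)t+A^2+B^2r^2+C^2r^4-2ACr^2
\]
is correct, the sign of the leading coefficient $4ACr^2$ really is the source of the dichotomy, and your treatment of $AC\ge 0$ (endpoints $t=\pm1$, then the concave quadratic $1+|A|+|B|r-(1-|C|)r^2$ in $r$) is complete and correct, up to the edge case $|C|\ge 1$, which you wave at but which is harmless: there $|B|\ge 2(1-|C|)$ holds automatically and the maximum sits at $r=1$, consistent with the first line of (i).

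The genuine gap is in case (ii), which is the entire substance of the lemma and the only part the paper actually invokes. There your proposal is a plan rather than a proof ("record the candidate value in each corner configuration, then eliminate dominated candidates"), and the one concrete structural claim you do make is backwards. Substituting the vertex $t_0=-B(A+Cr^2)/(4ACr)$ into the quadratic gives
\[
(A-Cr^2)^2\Bigl(1-\frac{B^2}{4AC}\Bigr),
\]
so the interior-$t$ branch yields the objective $(|A|+|C|r^2)\sqrt{1-B^2/(4AC)}+1-r^2$, which is \emph{linear} in $r^2$, hence monotone; it never produces a term $B^2/(4(1\pm|C|))$, and what it contributes (at $r=1$) is precisely the third entry $(|A|+|C|)\sqrt{1-B^2/(4AC)}$ of $R(A,B,C)$. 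The terms $1+|A|+B^2/(4(1+|C|))$ and $1-|A|+B^2/(4(1-|C|))$ come instead from the boundary branches $t=\pm1$ (real $z$): since $A$ and $C$ now have opposite signs, $|A+Cr^2|+|B|r+1-r^2$ splits into the regimes $|A|\ge|C|r^2$ and $|A|<|C|r^2$, giving the concave quadratics $1+|A|+|B|r-(1+|C|)r^2$ and $1-|A|+|B|r-(1-|C|)r^2$, whose interior vertices are exactly those two expressions and whose values at $r=1$ give the first two entries of $R$. So the correspondence you assert (interior $t\mapsto B^2/(4(1\pm|C|))$, boundary $t\mapsto R$) is the reverse of the truth, and the real work of part (ii) --- deriving the thresholds $-4AC(C^{-2}-1)\lessgtr B^2$, $|B|\lessgtr 2(1\mp|C|)$ and $|C|(|B|\pm4|A|)\lessgtr|AB|$ from the regime condition $|A|\lessgtr|C|r^2$ at the relevant vertex, from the feasibility condition $|t_0|\le 1$, and from pairwise domination among these five candidates --- is exactly what you defer. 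As written, the proposal proves part (i) but not part (ii).
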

The significance of logarithmic coefficients in geometric function theory has led to a growing interest in finding sharp bound of Hankel determinants with these coefficients. We obtain the following sharp bound of $H_{2,1}(F_f/2)$ for the class $\mathcal{S}^*_G$.
\begin{thm}\label{Th-2.1}
	Let $f(z)=z+a_2z^2+a_3z^3+\cdots\in\mathcal{S}^*_G$ and $ \gamma_{1}, \gamma_{2} $, $ \gamma_{3} $ are given by \eqref{Eq-2.2}. Then we have 
	\begin{align*}
		|H_{2,1}(F_f/2)|:=|\gamma_1\gamma_3-\gamma_2^2|\leq \frac{1}{64}.
	\end{align*}
	The inequality is sharp.
\end{thm}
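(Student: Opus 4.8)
The plan is to push everything into the Carathéodory class and then apply the two structural lemmas. Since $f\in\mathcal{S}^*_G$ means $zf'(z)/f(z)\prec\Psi(z)$, I would write $zf'(z)/f(z)=\Psi(\omega(z))$ for a Schwarz function $\omega$, and set $p(z)=(1+\omega(z))/(1-\omega(z))\in\mathcal{P}$ with coefficients $c_1,c_2,c_3$ as in \eqref{eq-2.3}, so that $\omega=(p-1)/(p+1)$. Expanding $\Psi(z)=z/\ln(1+z)=1+\tfrac12 z-\tfrac{1}{12}z^2+\tfrac{1}{24}z^3-\cdots$ via the Gregory coefficients and matching powers of $z$ on both sides of $zf'(z)/f(z)=\Psi\big((p-1)/(p+1)\big)$ gives the closed forms
\[
a_2=\frac{c_1}{4},\qquad a_3=\frac{c_2}{8}-\frac{c_1^2}{24},\qquad a_4=\frac{c_3}{12}-\frac{19c_1c_2}{288}+\frac{c_1^3}{72}.
\]

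Substituting these into the identity \eqref{eq-2.2}, $H_{2,1}(F_f/2)=\tfrac{1}{48}\big(a_2^4-12a_3^2+12a_2a_4\big)$, I expect a homogeneous degree-four expression
\[
H_{2,1}(F_f/2)=\frac{19}{36864}c_1^4-\frac{7}{4608}c_1^2c_2-\frac{1}{256}c_2^2+\frac{1}{192}c_1c_3.
\]
Because $H_{2,1}(F_f/2)$ is rotation-invariant, I may assume $c_1\geq0$ and apply the Libera–Zlotkiewicz parameterization (Lemma \ref{lem-2.1}): set $c_1=2\tau_1$ with $\tau_1\in[0,1]$ and substitute \eqref{eq-2.5}, \eqref{eq-2.6} for $c_2,c_3$. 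Since $\tau_3$ enters only linearly through $c_3$, its coefficient works out to $\tfrac{\tau_1(1-\tau_1^2)}{48}(1-|\tau_2|^2)$, while all $\tau_3$-free terms assemble into a quadratic $\alpha+\beta\tau_2+\delta\tau_2^2$ with \emph{real} coefficients $\alpha,\beta,\delta$ depending on $\tau_1$. Bounding by the triangle inequality with $|\tau_3|\leq1$ and factoring out $\tfrac{\tau_1(1-\tau_1^2)}{48}$ (for $\tau_1>0$) puts the estimate in the form $\tfrac{\tau_1(1-\tau_1^2)}{48}\big(|A+B\tau_2+C\tau_2^2|+1-|\tau_2|^2\big)$, which is exactly the quantity $Y(A,B,C)$ governed by Lemma \ref{lem-2.2}.

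The main obstacle I anticipate is the case analysis dictated by Lemma \ref{lem-2.2}: the sign of $AC$ and the relative sizes of $|A|,|B|,|C|$ — all functions of $\tau_1$ — split the argument into several subcases, and in each one the resulting one-variable function of $\tau_1\in[0,1]$ must be shown not to exceed $1/64$. The extremal candidate comes from the degenerate endpoint $\tau_1=0$: there the expression collapses to $-\tfrac{1}{256}c_2^2$ with $|c_2|=2$, yielding precisely $\big|{-}\tfrac{1}{256}\cdot 4\big|=\tfrac{1}{64}$. Confirming that no interior $\tau_1$ produces a larger value is the delicate part of the estimate.

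Finally, for sharpness I would exhibit the extremal function corresponding to $\tau_1=0$, $|\tau_2|=1$, i.e.\ $\omega(z)=z^2$, which satisfies $zf'(z)/f(z)=\Psi(z^2)$ and is given explicitly by
\[
f(z)=z\exp\!\left(\int_0^z\frac{\Psi(t^2)-1}{t}\,dt\right),
\]
and then verify directly that $|H_{2,1}(F_f/2)|=1/64$ for this $f$, establishing that the bound cannot be improved.
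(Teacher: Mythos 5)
Your proposal follows essentially the same route as the paper's own proof: identical coefficient formulas (your $a_2,a_3,a_4$ and the degree-four expression for $H_{2,1}(F_f/2)$ agree with the paper's \eqref{eq-2.11} and \eqref{eq-2.12}), the same reduction via Lemma \ref{lem-2.1} to the functional $Y(A,B,C)$ of Lemma \ref{lem-2.2}, the same identification of the extremal configuration at $\tau_1=0$, $|c_2|=2$, and the same sharpness function $f_1(z)=z\exp\left(\int_0^z \frac{\Psi(t^2)-1}{t}\,dt\right)$. The case analysis you defer is exactly what the paper executes in its Sub-Cases III(a)--(c), where the interior values of $\tau_1$ are shown to yield bounds no larger than $1/64$.
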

\begin{proof}
	Let $f\in \mathcal{S}^*_{G}$. Then there exists an analytic function $\omega$ with $\omega(0)=0$ and $|\omega(z)|<1$ in $\mathbb{D}$ such that 
	\begin{align}\label{eq-2.7}
		\frac{zf^{\prime}(z)}{f(z)}=\Psi(\omega(z))=\frac{\omega(z)}{\ln(1+\omega(z))},\;\;\;\; z\in\mathbb{D}.
	\end{align}
	Let $p\in\mathcal{P}$. Then, using the definition of subordination, we have
	\begin{align*}
		p(z)=\frac{1+\omega(z)}{1-\omega(z)}=1+c_1z+c_2z^2+\cdots,\; z\in\mathbb{D}.
	\end{align*} 
	Hence, it is evident that
	\begin{align}\label{eq-2.8}
		\omega(z)&=\nonumber\frac{p(z)-1}{p(z)+1}\\&\nonumber=\frac{c_1}{2}z+\frac{1}{2}\left(c_2-\frac{c_1^2}{2}\right)z^2+\frac{1}{2}\left(c_3-c_1c_2+\frac{c_1^3}{4}\right)z^3\\&\quad+\frac{1}{2}\left(c_4-c_1c_3+\frac{3c_1^2c_2}{4}-\frac{c_2^2}{2}-\frac{c_1^4}{8}\right)z^4+\cdots
	\end{align}
	in $\mathbb{D}$. Then $p$ is analytic in $\mathbb{D}$ with $p(0)=1$ and has positive real part in $\mathbb{D}$. In view of \eqref{eq-2.8} together with $\Psi(\omega(z))$, a tedious computation shows that 
	\begin{align}\label{eq-2.9}
		\Psi(\omega(z))&=1+\frac{c_1}{4}z+\frac{1}{48}\left(-7c_1^2+12c_2\right)z^2+\frac{1}{192}\left(17c_1^3-56c_1c_2+48c_3\right)z^3\nonumber\\&\quad+\frac{1}{11520}\left(-649c_1^4+3060c_2^2c_2-3360c_1c_3-1680c_2^2+2880c_4\right)z^4+\cdots.
	\end{align}
	Further, 
	\begin{align}\label{eq-2.10}
		\frac{zf^{\prime}(z)}{f(z)}&\nonumber= 1+a_2 z +(-a^2_{2} +2a_3)z^2 +(a^3_{2} -3a_2 a_3 +3a_4)z^3 \\& \quad+(-a^4_{2} +4a^2_{2}a_3 -2a^2_{3} -4a_2 a_4 +4a_5)z^4 +\cdots.
	\end{align}
	Hence, by \eqref{eq-2.7}, \eqref{eq-2.9}, and \eqref{eq-2.10}, we obtain
	\begin{align}\label{eq-2.11}
		\begin{cases}
			a_2=\dfrac{c_1}{4},\vspace{2mm}\\
			a_3=\dfrac{1}{24}\left(3c_2-c_1^2\right),\vspace{2mm}\\
			a_4=\dfrac{1}{288}\left(4c_1^3-19c_1c_2+24c_3\right),\vspace{2mm}\\
			a_5=-\dfrac{1}{11520}\left(71c_1^4+330c_2^2+600c_1c_3-425c_1^2c_2-720c_4\right).
		\end{cases}
	\end{align}
	Using \eqref{eq-2.2} and \eqref{eq-2.11}, a standard computation leads to
	\begin{align}\label{eq-2.12}
		H_{2,1}(F_f/2)&\nonumber=\frac{1}{48}\left(a_2^4-12a_3^2+12a_2a_4\right)\\&=\frac{1}{36864}\left(19c_1^4 -56c_1^2 c_2 -144c_2^2 +192c_1 c_3\right).
	\end{align}
	By Lemma \ref{lem-2.1} and \eqref{eq-2.12}, we obtain
	\begin{align}\label{eq-2.13}
		H_{2,1}(F_f/2)\nonumber&=\frac{1}{2304} \bigg(3\tau_1^4-4\tau_1^2\tau_2 (1-\tau_1^2)-12\tau_2^2(1-\tau_1^2)(3+\tau_1^2)\\&\quad+48\tau_1\tau_3(1-\tau_1^2)(1-|\tau_2|^2)\bigg).
	\end{align}
	
	We now explore three possible cases involving $\tau_1$. \vspace{1.2mm}
	
	\noindent{\bf Case-I.} Let $\tau_1=1$. Then, from \eqref{eq-2.13} we see that 
	\begin{align*}
		|H_{2,1}(F_f/2)|=\frac{1}{768}\approx 0.0013020833.
	\end{align*}
	\noindent{\bf Case-II.} Let $\tau_1=0$. Then, from \eqref{eq-2.13} we get 
	\begin{align*}  
		|H_{2,1}(F_f/2)|=\bigg|\frac{1}{2304}\left(-36\tau_2^2\right)\bigg|\leq\frac{1}{64}\approx 0.015625.
	\end{align*}
	\noindent{\bf Case-III.} Let $\tau_1\in (0, 1)$. Applying triangle inequality in \eqref{eq-2.13} and using the fact that $|\tau_3|\leq 1$, we obtain
	\begin{align}\label{eq-2.14}
		|H_{2,1}(F_f/2)|&\nonumber\leq\frac{1}{2304}\bigg(\bigg|3\tau_1^4-4\tau_1^2\tau_2(1-\tau_1^2)-12\tau_2^2(1-\tau_1^2)(3+\tau_1^2)\bigg|\\&\nonumber\quad+48\tau_1(1-\tau_1^2)(1-|\tau_2|^2)\bigg)\\&=\frac{1}{48}\tau_1(1-\tau_1^2)\left(\vline\; A+B\tau_2+C\tau_2^2\;\vline+1-|\tau_2|^2\right)\nonumber\\&:=\frac{1}{48}\tau_1(1-\tau_1^2)Y(A, B, C),
	\end{align}
	where
	\begin{align*}
		A=\frac{\tau_1^3}{16(1-\tau_1^2)},\;\; B=-\frac{\tau_1}{12},\;\;\mbox{and}\;\; C=-\frac{(3+\tau_1^2)}{4\tau_1}.
	\end{align*}
	Note that $AC<0$. Hence, we can apply case (ii) of Lemma \ref{lem-2.2} and discuss the following cases.\\
	
	\noindent{\bf Sub-Case III(a).} A simple computation shows that
	\begin{align*}
		-4AC\left(\frac{1}{C^2}-1\right)-B^2&=\frac{\tau_1^2\left(3+\tau_1^2\right)}{16\left(1-\tau_1^2\right)}\left(\frac{16\tau_2^2}{\left(3+\tau_1^2\right)^2}-1\right)-\frac{\tau_1^2}{144}\\&=-\frac{\tau_1^2\left(21-2\tau_1^2\right)}{36\left(3+\tau_1^2\right)}\leq 0
	\end{align*}
	which is true for $\tau_1\in (0, 1)$. However, we see that
	\begin{align*}
		|B|-2(1-|C|)=-2+\frac{3}{2\tau_1}+\frac{7\tau_1}{12}>0
	\end{align*}
	for all $\tau_1\in (0, 1)$. \textit{i.e.,} $|B|>2(1-|C|)$. Hence, 
	\begin{align*}
		Y(A, B, C)\neq 1-|A|+\frac{B^2}{4(1-|C|)}.
	\end{align*}\\
	
	\noindent{\bf Sub-Case III(b).} For $\tau_1\in (0, 1)$, we see that 
	\begin{align*}
		4(1+|C|)^2=4\left(1+\frac{\left(3+\tau_1^2\right)}{4\tau_1}\right)^2>0\; \mbox{and}\; -4AC\left(\frac{1}{C^2}-1\right)=-\frac{\tau_1^2\left(9-\tau_1^2\right)}{16\left(3+\tau_1^2\right)}<0.
	\end{align*}
	Hence,
	\begin{align*}
		0<\frac{\tau_1^2}{144}=B^2<\min\bigg\{4(1+|C|)^2, -4AC\left(\frac{1}{C^2}-1\right) \bigg\}=-\frac{\tau_1^2\left(9-\tau_1^2\right)}{16\left(3+\tau_1^2\right)}<0
	\end{align*}
	which is false. Clearly, 
	\begin{align*}
		Y(A, B, C)\neq 1+|A|+\frac{B^2}{4(1+|C|)}.
	\end{align*}\\
	
	\noindent{\bf Sub-Case III(c).} Next, note that the inequality
	\begin{align*}
		|C|\left(|B|+4|A|\right)-|AB|=\frac{7\tau_1^4+28\tau_1^2+12}{192\left(1-\tau_1^2\right)}\geq 0\; \mbox{for}\; \tau_1\in (0, 1).
	\end{align*}
	This implies that $|C|(|B|+4|A|)\geq |AB|$. Hence, $Y(A, B, C)\neq |A|+|B|-|C|$. \vspace{1.2mm}
	
	For $\tau_1\in (0, 1)$, we see that 
	\begin{align*}
		|AB|-|C|\left(|B|+4|A|\right)=\frac{17\tau_1^4+44\tau_1^2-12}{192\left(1-\tau_1^2\right)}\leq 0
	\end{align*}
	if $ 17\tau_1^4+44\tau_1^2-12\leq 0$. In fact, this inequality holds for $0<\tau_1\leq \tau_1^*:=\sqrt{-\frac{22}{17}+\frac{4\sqrt{43}}{17}}\approx 0.498808$. Thus, by Lemma \ref{lem-2.2}, we have 
	\begin{align*}
		Y(A, B, C)=-|A|+|B|+|C|=\frac{36-20\tau_1^2-19\tau_1^4}{48\tau_1\left(1-\tau_1^2\right)}\; \mbox{for}\; \tau_1\in (0, \tau_1^*].
	\end{align*} 
	Using \eqref{eq-2.14}, we obtain that 
	\begin{align*}
		|H_{2,1}(F_f/2)|\leq \frac{1}{2304}\left(36-20\tau_1^2-19\tau_1^4\right)=\frac{1}{2304}\Phi_1(\tau_1)
	\end{align*}
	where 
	\begin{align*}
		\Phi_1(t):=36-20t^2-19t^4, \;\;0\leq t\leq \tau_1^*.
	\end{align*}
	Since $\Phi_1^{\prime}(t)\leq 0$ for all $t\in [0, \tau_1^*]$, the function $\Phi_1$ is decreasing on $[0, \tau_1^*]$. Hence, we have $\Phi_1(\tau_1)\leq \Phi_1(0)=36$. Therefore, we see that
	\begin{align*}
		|H_{2,1}(F_f/2)|\leq \frac{1}{64}\approx 0.015625.
	\end{align*}
	Our next task is to find the bound of $ 	|H_{2,1}(F_f/2)| $ on $(\tau_1^*, 1)$ using Lemma \ref{lem-2.2}. Henceforth, by a tedious computation, we see that
	\begin{align*}
		Y(A, B, C)=\left(|C|+|A|\right)\sqrt{1-\frac{B^2}{4AC}}=\frac{12-8\tau_1^2-3\tau_1^4}{24\tau_1\left(1-\tau_1^2\right)}\sqrt{\frac{7+2\tau_1^2}{3+\tau_1^2}}.
	\end{align*}
	In view of \eqref{eq-2.14}, we see that 
	\begin{align*}
		|H_{2,1}(F_f/2)|\leq \frac{12-8\tau_1^2-3\tau_1^4}{1152}\sqrt{\frac{7+2\tau_1^2}{3+\tau_1^2}}=\frac{1}{1152}\Phi_2(\tau_1),
	\end{align*}
	where 
	\begin{align*}
		\Phi_2(t):=(12-8t^2-3t^4)\sqrt{\frac{7+2t^2}{3+t^2}},  \;\;\tau_1^*\leq t\leq 1.
	\end{align*}
	We see that 
	\begin{align*}
		\Phi_2^{\prime}(t)=-\frac{t\left(348+452t^2+185t^4+24t^6\right)}{\left(3+t^2\right)^2}\sqrt{\frac{3+t^2}{7+2t^2}}<0\;\;\mbox{for}\;\; \tau_1^*\leq t\leq 1.
	\end{align*}
	By the similar argument being used before, we see that
	\begin{align*}
		\Phi_2(\tau_1)\leq \frac{1}{289}\left(-2192+992\sqrt{43}\right).
	\end{align*}
	Hence,
	\begin{align*}
		|H_{2,1}(F_f/2)|\leq\frac{-137+62\sqrt{43}}{20808}\approx 0.0129547.
	\end{align*}
	Summarizing all the above cases, we conclude that
	\begin{align*}
		|H_{2,1}(F_f/2)|\leq \frac{1}{64}
	\end{align*}
	which is the desired inequality of the result.\vspace{1.2mm}
	
	To establish the sharpness of inequality, we consider the function $f_1\in\mathcal{S}^*_G$ defined by 
	\begin{align*}
		f_1(z)=z\exp\left(\int_{0}^{z}\frac{\Psi(t^2)-1}{t}dt\right)=z+\frac{1}{4}z^3+\cdots.
	\end{align*}
	It is easy to see that $a_2=0$ and $a_3=1/4$. Thus, it follows that 
	\begin{align*}
		|\gamma_1\gamma_3-\gamma_2^2|=\frac{1}{48}\bigg|\left(a_2^4-12a_3^2+12a_2a_4\right)\bigg|=\frac{1}{64}.
	\end{align*}
	This completes the proof.
\end{proof}
\section{\bf Sharp bound of Fekete-Szeg$\ddot{o}$ inequality for the class $\mathcal{S}^*_G$}
This determinant  $H_{q,n}(f)$ was discussed by several authors with $q = 2$. For example, we know that the functional $H_{2,1}(f)=a_3 -a^2_2$ is known as the Fekete-Szeg$\ddot{o}$ functional and they consider the further generalized functional $a_3 -\mu a^2_2$, where $\mu$ is some real number. Estimation for the upper bound of $|a_3 -\mu a^2_2|$ is known as the Fekete-Szeg$\ddot{o}$ problem. In 1969, Keogh and Merkes \cite{Keogh-Merkes-PAMS-1969} solved the Fekete-Szeg$\ddot{o}$ problem for the class $\mathcal{S}^*$. A unified approach to the Fekete-Szeg$\ddot{o}$ problem were investigated by many authors for various subclasses \cite{Abdel-Thomas-PAMS-1992, Elin-Jaco-RM-2022, Fekete-Szego_JLMS-1933, Keogh-Merkes-PAMS-1969,Koepf-PAMS-1987,Xu-Jiang-Liu-JMAA-2023}. In this section we shall investigate the upper bound of $|a_3 -\mu a^2_2|$ for the class $\mathcal{S}^*_G$. The following lemma will be useful in our investigation.
\begin{lem}(\cite{Ma-Minda-1994})\label{lem-3.1}
	Let $p\in\mathcal{P}$ be given by \eqref{eq-2.1}. Then
	\begin{align*}
		|c_2 -vc^2_1|\leq\begin{cases}
			-4v +2 \;\;\;\; v<0,\\ 2 \;\;\;\;\;\;\;\;\;\;\;\;\;\; 0\leq v\leq 1, \\ 4v -2 \;\;\;\;\;\;\; v>1.
		\end{cases}
	\end{align*}
	For $v<0$ or $v>1$, the equality holds if and only if
	\begin{align*}
		h(z)=\frac{1+z}{1-z}
	\end{align*}
	or one of its rotations. If $0 < v < 1$, then the equality is true if and only if
	\begin{align*}
		h(z)=\frac{1+z^2}{1-z^2}
	\end{align*}
	or one of its rotations.
\end{lem}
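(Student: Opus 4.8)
The plan is to obtain the bound directly from the Libera--Zlotkiewicz parametrization recorded in Lemma \ref{lem-2.1}, which reduces the whole problem to an elementary one-variable optimization, and then to extract the extremal functions from the uniqueness clauses of that same lemma.

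First I would exploit a symmetry to normalize the first coefficient. The modulus $|c_2 - v c_1^2|$ is invariant under the rotation $p(z) \mapsto p(e^{i\theta}z)$: this sends $c_n$ to $c_n e^{in\theta}$, so $c_2 - v c_1^2$ is merely multiplied by $e^{2i\theta}$. Choosing $\theta = -\arg c_1$ (and any $\theta$ when $c_1 = 0$), I may assume without loss of generality that $c_1 \ge 0$, which is exactly the hypothesis under which Lemma \ref{lem-2.1} applies. Writing $c_1 = 2\tau_1$ and $c_2 = 2\tau_1^2 + 2(1-\tau_1^2)\tau_2$ with $\tau_1 \in [0,1]$ and $\tau_2 \in \overline{\mathbb{D}}$ as in \eqref{eq-2.4}--\eqref{eq-2.5}, a direct substitution gives
\[ c_2 - v c_1^2 = 2\tau_1^2(1 - 2v) + 2(1-\tau_1^2)\tau_2 . \]

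Next, applying the triangle inequality with $|\tau_2| \le 1$ yields
\[ |c_2 - v c_1^2| \le 2\tau_1^2\,|1 - 2v| + 2(1-\tau_1^2) = 2 + 2\tau_1^2\bigl(|1-2v| - 1\bigr) , \]
which is an affine function of $t := \tau_1^2 \in [0,1]$. Its maximum is therefore attained at an endpoint according to the sign of $|1-2v| - 1$. For $0 \le v \le 1$ one has $|1-2v| \le 1$, the slope is nonpositive, and the maximum is $2$ at $t = 0$; for $v < 0$ or $v > 1$ one has $|1-2v| > 1$, the slope is positive, and the maximum is $2|1-2v|$ at $t = 1$, which equals $-4v+2$ and $4v-2$ respectively. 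This reproduces the three stated bounds, and the inequality part is essentially complete.

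The part requiring genuine care --- and which I expect to be the main obstacle --- is the equality characterization rather than the inequality. For $v < 0$ or $v > 1$ the optimum forces $t = \tau_1^2 = 1$, hence $\tau_1 = 1$ and $c_1 = 2$; the uniqueness clause of Lemma \ref{lem-2.1} for $\tau_1 \in \mathbb{T}$ then identifies $p$ with $(1+z)/(1-z)$, and undoing the normalizing rotation produces precisely its rotations. For $0 < v < 1$ one has $|1-2v| < 1$ strictly, so the affine bound is strictly decreasing and equality forces $t = 0$, i.e. $c_1 = 0$ and $c_2 = 2\tau_2$; attaining modulus $2$ then demands $|\tau_2| = 1$, and the uniqueness clause for $\tau_1 \in \mathbb{D}$, $\tau_2 \in \mathbb{T}$ (here $\tau_1 = 0$) pins down $p(z) = (1+\tau_2 z^2)/(1-\tau_2 z^2)$, a rotation of $(1+z^2)/(1-z^2)$. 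The delicate step is to confirm that these alignment conditions are simultaneously necessary and sufficient, and to verify carefully that the rotation introduced in the normalization is undone so that the extremal families are described exactly as the rotations asserted in the statement.
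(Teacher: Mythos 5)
Your proposal is correct. Note first that the paper itself offers no proof of this statement: it is quoted verbatim from Ma--Minda \cite{Ma-Minda-1994}, so there is no internal argument to compare against; what you have written is a genuine, self-contained derivation. Your route --- normalizing $c_1\ge 0$ by rotation invariance of $|c_2-vc_1^2|$, substituting the Libera--Zlotkiewicz parametrization of Lemma \ref{lem-2.1} to get $c_2-vc_1^2=2\tau_1^2(1-2v)+2(1-\tau_1^2)\tau_2$, and maximizing the resulting affine function of $\tau_1^2$ --- is in substance the classical argument: the identity $c_2-\tfrac{1}{2}c_1^2=2(1-\tau_1^2)\tau_2$ is exactly the well-known inequality $|c_2-\tfrac{1}{2}c_1^2|\le 2-\tfrac{1}{2}|c_1|^2$ on which Ma and Minda's own proof rests, so your use of Lemma \ref{lem-2.1} buys you both that inequality and, crucially, the uniqueness clauses needed for the equality cases. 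Your case analysis is sound: for $0\le v\le 1$ the slope $|1-2v|-1\le 0$ forces the maximum $2$ at $\tau_1=0$; for $v<0$ or $v>1$ the positive slope forces $\tau_1=1$, giving $2|1-2v|$, which matches $-4v+2$ and $4v-2$. The equality discussion is also handled correctly, including the point that strictness of $|1-2v|<1$ (available only for $0<v<1$, which is why the lemma excludes the endpoints $v=0,1$) pins $\tau_1=0$ and $|\tau_2|=1$, after which the uniqueness clauses identify $p$ up to the rotation you introduced; the converse direction (that the named functions attain equality) is a one-line computation you correctly flag as needing to be said. No gap remains beyond writing out those routine verifications.
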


We obtain the following sharp inequality, which is the upper bound of $|a_3 -\mu a^2_2|$ for the class $\mathcal{S}^*_G$.
\begin{thm}\label{Th-3.1}
	Let $f(z)=z+a_2z^2+a_3z^3+\cdots\in\mathcal{S}^*_G$. Then we have 
	\begin{align*}|a_3-\mu a_2^2|\leq 
		\begin{cases}
			\dfrac{1}{12}(1-3\mu),\;\;\; \mu<-\dfrac{2}{3},\vspace{2mm}\\
			\dfrac{1}{4},\;\;\;\;\;\;\;\;\;\;\;\;\;\;\;\;-\dfrac{2}{3}\leq \mu\leq\dfrac{4}{3},\vspace{2mm}\\
			\dfrac{1}{12}(1-3\mu),\;\;\; \mu>\dfrac{4}{3}.
		\end{cases}.
	\end{align*}
	The inequalities are sharp.
\end{thm}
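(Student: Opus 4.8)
The plan is to reduce the functional $a_3-\mu a_2^2$ to the classical Keogh-Merkes / Ma-Minda form $c_2-v c_1^2$ and then invoke Lemma \ref{lem-3.1} directly. First I would start from the subordination $zf'(z)/f(z)\prec\Psi(z)$ together with the coefficient identities already recorded in \eqref{eq-2.11}, namely $a_2=c_1/4$ and $a_3=(3c_2-c_1^2)/24$, where $c_1,c_2$ are the Carath\'eodory coefficients of the associated $p\in\mathcal{P}$. Substituting these into the functional and collecting the $c_1^2$ term, a short computation gives
\begin{align*}
	a_3-\mu a_2^2=\frac{1}{8}\left(c_2-v\,c_1^2\right),\qquad v:=\frac{1}{3}+\frac{\mu}{2}.
\end{align*}
Taking moduli yields $|a_3-\mu a_2^2|=\tfrac18|c_2-v c_1^2|$, so the whole problem is now governed by the single parameter $v$.

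The core step is to apply Lemma \ref{lem-3.1} to $|c_2-v c_1^2|$, handling the three regimes $v<0$, $0\le v\le 1$, $v>1$, and then translating them back into conditions on $\mu$ through the affine relation $v=\tfrac13+\tfrac\mu2$. This gives $v<0\iff\mu<-\tfrac23$, $0\le v\le1\iff-\tfrac23\le\mu\le\tfrac43$, and $v>1\iff\mu>\tfrac43$, which are exactly the three ranges in the statement. In the middle regime Lemma \ref{lem-3.1} gives $|c_2-v c_1^2|\le 2$, hence $|a_3-\mu a_2^2|\le\tfrac14$. In the outer regimes the bounds $-4v+2$ and $4v-2$, each multiplied by $\tfrac18$, simplify to $\tfrac{1}{12}(1-3\mu)$ for $\mu<-\tfrac23$ and to $\tfrac{1}{12}(3\mu-1)=\tfrac{1}{12}|1-3\mu|$ for $\mu>\tfrac43$; I would record the third case in the form $\tfrac{1}{12}|1-3\mu|$ so that the stated upper bound remains nonnegative.

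For sharpness I would exhibit explicit extremal members of $\mathcal{S}^*_G$. In the middle range equality in Lemma \ref{lem-3.1} is attained by $p(z)=(1+z^2)/(1-z^2)$, corresponding to $\omega(z)=z^2$; the resulting extremal function is precisely $f_1(z)=z\exp\!\left(\int_0^z(\Psi(t^2)-1)/t\,dt\right)=z+\tfrac14 z^3+\cdots$ used in Theorem \ref{Th-2.1}, for which $a_2=0$ and $a_3=\tfrac14$, so that $|a_3-\mu a_2^2|=\tfrac14$. In the outer ranges equality in Lemma \ref{lem-3.1} is attained by $p(z)=(1+z)/(1-z)$ (all $c_n=2$), i.e.\ $\omega(z)=z$, giving the function $f_0(z)=z\exp\!\left(\int_0^z(\Psi(t)-1)/t\,dt\right)$ determined by $zf_0'(z)/f_0(z)=\Psi(z)$. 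Reading off its Taylor coefficients from the Gregory expansion $\Psi(z)=1+\tfrac12 z-\tfrac{1}{12}z^2+\cdots$ gives $a_2=\tfrac12$ and $a_3=\tfrac{1}{12}$, whence $a_3-\mu a_2^2=\tfrac{1}{12}(1-3\mu)$, matching the claimed bounds in both outer cases up to the sign noted above.

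Because every step is an equality reduction followed by a direct appeal to Lemma \ref{lem-3.1}, I do not anticipate a genuine obstacle here; the only points requiring care are the arithmetic producing $v=\tfrac13+\tfrac\mu2$ and the explicit verification that the extremal functions $f_0$ and $f_1$ indeed lie in $\mathcal{S}^*_G$ and realize equality.
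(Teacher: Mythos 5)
Your proposal is correct and follows essentially the same route as the paper: the identical reduction $a_3-\mu a_2^2=\tfrac{1}{8}\left(c_2-vc_1^2\right)$ with $v=(2+3\mu)/6=\tfrac13+\tfrac{\mu}{2}$, the same appeal to Lemma \ref{lem-3.1} in the three regimes of $v$, and the same extremal functions (the paper's $f_2$ and $f_3$, generated by $\omega(z)=z$ and $\omega(z)=z^2$) for sharpness. Your remark that the bound in the case $\mu>\tfrac43$ should be recorded as $\tfrac{1}{12}(3\mu-1)=\tfrac{1}{12}|1-3\mu|$ is also correct and fixes a sign typo in the theorem statement, since the paper's own Case 3 computation produces exactly this nonnegative value.
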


\begin{proof}
	Since $f\in\mathcal{S}^*_G$, there exists an analytic function $\omega$ with $\omega(0)=0$ and $|\omega(z)|<1$ in $\mathbb{D}$ such that \eqref{eq-2.6} holds. Then in view of \eqref{eq-2.10}, we see that 
	\begin{align*}
		|a_3-\mu a_2^2|=\bigg|\dfrac{1}{24}\left(3c_2-c_1^2\right)-\mu\frac{c_1^2}{16}\bigg|=\frac{1}{8}|c_2-vc_1^2|,
	\end{align*}
	where $v=(2+3\mu)/6$. \\
	
	\noindent{\bf Case 1.} If $v<0$ \emph{i.e.} if $\mu<-(2/3)$, then by Lemma \ref{lem-3.1}, we obtain 
	\begin{align*}
		|c_2-vc_1^2|=-4v+2=\frac{2(1-3\mu)}{3}.
	\end{align*}
	
	\noindent{\bf Case 2.} If $0\leq v\leq 1$ \emph{i.e.} if $-(2/3)\leq \mu\leq (4/3)$, then by Lemma \ref{lem-3.1}, we see that 
	\begin{align*}
		|c_2-vc_1^2|=2.
	\end{align*}
	\noindent{\bf Case 3.} If $v> 1$ \emph{i.e.} if $\mu> (4/3)$, then by Lemma \ref{lem-3.1}, we have
	\begin{align*}
		|c_2-vc_1^2|=4v-2=\frac{2(3\mu-1)}{3}.
	\end{align*}
	Summarizing all the above cases, we obtain the desired inequality of the result. \vspace{2mm}
	
	The next part is to show the sharpness of the inequality. We consider the following functions $f_2$ and $f_3$ from the class $ \mathcal{S}^*_G$
	\begin{align*}
		f_2(z)&=z\exp\left(\int_{0}^{z}\frac{\Psi(t)-1}{t}dt\right)=z+\frac{1}{2}z^2+\frac{1}{12}z^3+\cdots,\vspace{1.5mm}\\
		f_3(z)&=z\exp\left(\int_{0}^{z}\frac{\Psi(t^2)-1}{t}dt\right)=z+\frac{1}{4}z^3+\frac{1}{96}z^5+\cdots.
	\end{align*} 
	For the expression of $f_2$, we see that $a_2=1/2$ and $a_3=1/12$. In either case $\mu<-(2/3)$ or $\mu>(4/3)$, we see that 
	\begin{align*}
		|a_3-\mu a_2^2|=\frac{1}{12}|1-3\mu|=\pm\frac{1}{12}(1-3\mu).
	\end{align*}
	For the expression of $f_3$, we have $a_2=0$ and $a_3=1/4$. When $-(2/3)\leq \mu\leq (4/3)$, we see that 
	\begin{align*}
		|a_3-\mu a_2^2|=|a_3|=\frac{1}{4}.
	\end{align*}
	This establishes sharpness of the inequalities. This completes the proof.
\end{proof}

It is worth noticing that for $\mu=1$, the quantity $|a_3-\mu a_2^2|$ is the Zalcman functional $|a_3-a_2^2|$ (for $n=2$) which is also the Hankel determinant $H_{2,1}(f)$. As a consequence of Theorem \ref{Th-3.1}, we obtain the following corollary in which we show that the sharp bound of Zalcman functional and Hankel determinant $H_{2,1}(f)$ are obtained for the class $\mathcal{S}^*_G$.
\begin{cor}
	Let $f(z)=z+a_2z^2+a_3z^3+\cdots\in\mathcal{S}^*_G$. Then we have 
	\begin{align*}
		|a_3- a_2^2|\leq\frac{1}{4}.
	\end{align*}
	The inequality is sharp for the function $f_3\in\mathcal{S}^*_G$.
\end{cor}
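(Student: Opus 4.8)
The plan is to read off this corollary as the single specialization $\mu = 1$ of Theorem~\ref{Th-3.1}. First I would observe that the Zalcman functional (equivalently the Hankel determinant $H_{2,1}(f) = a_3 - a_2^2$) is precisely the Fekete--Szeg$\ddot{o}$ functional $a_3 - \mu a_2^2$ evaluated at $\mu = 1$. Since $1$ lies strictly inside the interval $[-2/3, 4/3]$, the applicable branch of the three-part estimate in Theorem~\ref{Th-3.1} is the middle one, which returns $|a_3 - a_2^2| \leq 1/4$ with no further calculation. All of the analytic content has already been absorbed into the proof of Theorem~\ref{Th-3.1}, where the identity $|a_3 - \mu a_2^2| = \tfrac{1}{8}|c_2 - v c_1^2|$ with $v = (2+3\mu)/6$ reduces the problem to Lemma~\ref{lem-3.1}; at $\mu = 1$ one has $v = 5/6 \in [0,1]$, so Lemma~\ref{lem-3.1} gives $|c_2 - v c_1^2| \leq 2$ and hence the bound $1/4$.

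For the sharpness I would reuse the extremal function $f_3 \in \mathcal{S}^*_G$ already displayed in the proof of Theorem~\ref{Th-3.1}, namely $f_3(z) = z\exp\left(\int_{0}^{z} \frac{\Psi(t^2)-1}{t}\,dt\right) = z + \tfrac{1}{4}z^3 + \cdots$, for which $a_2 = 0$ and $a_3 = 1/4$. A direct substitution then yields $|a_3 - a_2^2| = |1/4 - 0| = 1/4$, matching the upper bound and confirming that the constant $1/4$ is attained.

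Since the statement is a direct corollary, there is essentially no obstacle to overcome: the only point deserving explicit verification is that $\mu = 1$ genuinely falls in the middle range of Theorem~\ref{Th-3.1} (equivalently that $v = 5/6$ lies in $[0,1]$), which guarantees that the flat bound $1/4$ applies rather than either of the two linear branches. The remark identifying $a_3 - a_2^2$ with $H_{2,1}(f)$ is purely observational and requires no separate argument.
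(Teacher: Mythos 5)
Your proposal is correct and follows exactly the paper's route: the corollary is obtained by specializing Theorem~\ref{Th-3.1} at $\mu=1$ (which lies in the middle range $-2/3\leq\mu\leq 4/3$, equivalently $v=5/6\in[0,1]$), with sharpness witnessed by the same extremal function $f_3$ having $a_2=0$ and $a_3=1/4$. Nothing further is needed.
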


\section{\bf Sharp bound of Zalcman functional for the class $\mathcal{S}^*_G$}
In recent years, special attention has been given on coefficient problems in the class $\mathcal{S}$ and its subclasses, and more broadly, in sub-classes of the class $\mathcal{A}$. In the early $70$s, Lawrence Zalcman posed the conjecture that if $f\in\mathcal{S}$, and is given by \eqref{eq-1.1}, then for $n\geq 2$,
\begin{align*}
	|a^2_{n} -a_{2n-1}|\leq (n-1)^2
\end{align*}
with equality for the Koebe function $K(z):=z/(1-z)^2$ for $z\in \mathbb{D}$, or a rotation. This conjecture implies the celebrated Bieberbach conjecture $|a_n|\leq n$. Bieberbach Theorem shows that the Zalcman conjecture is true for $n=2$ (see \cite[p. 35]{Goodman-1983}). Kruskal established the conjecture for $n=3$ (see \cite{Krushkal-JAM-1995}) and more recently for $n=4,5,6$ (see \cite{Krushkal-GMJ-2010}). For $n>6$ the Zalcman conjecture remains an open problem. In this section, we establish a sharp bound for the Zalcman functional in the case where $n =3$ \textit{i.e.}, we investigate the upper bound of $|a_3^2-a_5|$ for the class $\mathcal{S}^*_G$. The following lemma will be useful in our investigation.\vspace{1.2mm}

\begin{lem}(\cite{Ravichandran-Verma-CRMAS-2015})\label{lem-4.1}
	Let $p\in\mathcal{P}$ be given by \eqref{eq-2.1}. If $0<a<1$, $0 <b<1$ and
	\begin{align*}
		8a(1-a)\{(b\beta -2\lambda)^2 +(b(a+b)-\beta)^2\} +b(1-b)(\beta -2ab)^2 \leq 4ab^2(1-a)(1-b)^2
	\end{align*}
	then 
	\begin{align*}
		|\lambda c^4_1 +ac^2_2 +2bc_1 c_3 -\frac{3}{2}\beta c^2_1 c_2 -c_4|\leq 2.
	\end{align*}
\end{lem}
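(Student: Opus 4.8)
The plan is to reduce the estimate to a finite-dimensional extremal problem over the Carath\'eodory parametrization and then to recognize the stated hypothesis as a sum-of-squares feasibility condition for the resulting optimization. Throughout write $\Phi:=\lambda c_1^4+ac_2^2+2bc_1c_3-\frac{3}{2}\beta c_1^2c_2-c_4$, so that the claim is $|\Phi|\le 2$ whenever $p\in\mathcal{P}$ and the stated inequality on $a,b,\lambda,\beta$ holds.

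First I would exploit rotation invariance. For $\theta\in\mathbb{R}$ the function $p_\theta(z):=p(e^{i\theta}z)$ again lies in $\mathcal{P}$, with coefficients $e^{in\theta}c_n$; in each of the five monomials of $\Phi$ the subscripts add up to four (counted with multiplicity), so every term scales by the common factor $e^{4i\theta}$ and $|\Phi|$ is unchanged. Hence I may assume $c_1=c\in[0,2]$. Then I would invoke the standard Carath\'eodory parametrization extending Lemma \ref{lem-2.1} to the fourth coefficient, expressing $c_2,c_3,c_4$ through $c$ and free parameters $x,\zeta,\eta\in\overline{\mathbb{D}}$, the crucial feature being that $c_4$ is \emph{affine} in $\eta$ with $\eta$-coefficient of modulus $\tfrac12(4-c^2)(1-|x|^2)(1-|\zeta|^2)$.

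The second step is to eliminate $\eta$ and then $\zeta$. Substituting, $\Phi$ becomes affine in $\eta$, so the triangle inequality over $|\eta|\le 1$ gives
\begin{align*}
	|\Phi|\le \tfrac12(4-c^2)(1-|x|^2)\Big(\,|A+B\zeta+C\zeta^2|+1-|\zeta|^2\,\Big),
\end{align*}
where $A,B,C$ are explicit expressions in $c$ and $x$ obtained after dividing out the common factor $\tfrac12(4-c^2)(1-|x|^2)$ (the degenerate locus $c=2$ or $|x|=1$, where this factor vanishes, is checked directly and yields a bound below $2$). The inner maximum over $\zeta\in\overline{\mathbb{D}}$ is exactly $Y(A,B,C)$, so Lemma \ref{lem-2.2} applies and returns a bound depending only on $c\in[0,2]$ and $|x|\in[0,1]$.

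Finally I would carry out the remaining two-variable optimization. Completing the square in $|x|$ (with the opening of the parabola controlled by $a\in(0,1)$) and then in the auxiliary directions governed by $b\in(0,1)$, the requirement that the maximum not exceed $2$ collapses to the single algebraic inequality
\begin{align*}
	8a(1-a)\big\{(b\beta-2\lambda)^2+(b(a+b)-\beta)^2\big\}+b(1-b)(\beta-2ab)^2\le 4ab^2(1-a)(1-b)^2,
\end{align*}
which is precisely the hypothesis. I expect the main obstacle to be the bookkeeping of the $c_4$-substitution together with tracking which branch of Lemma \ref{lem-2.2} is active on each subregion of the $(c,|x|)$-square, and then checking that the completed-square (discriminant) condition produced by the worst branch is exactly the stated inequality rather than a strictly stronger one; the factors $a(1-a)$, $b(1-b)$ and $(1-a)(1-b)^2$ in the hypothesis are the tell-tale signatures of these two successive completions of squares.
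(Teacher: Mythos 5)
First, a framing point: the paper does not prove Lemma \ref{lem-4.1} at all --- it is imported, with citation, from \cite{Ravichandran-Verma-CRMAS-2015}, so there is no in-paper proof to compare against; the relevant benchmark is the original source, where the bound is obtained from the coefficient parametrization by parameter-uniform estimates (triangle inequalities passing to the moduli of the free parameters, followed by completions of squares), the displayed hypothesis being precisely the sufficient condition those estimates produce. Your opening steps are fine: the rotation argument (every monomial in $\Phi$ has total index $4$, so $c_1=c\in[0,2]$ may be assumed) and the fact that $c_4$ is affine in the third parameter $\eta$ with coefficient of modulus $\tfrac12(4-c^2)(1-|x|^2)(1-|\zeta|^2)$ are both correct. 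After that, however, the plan has two genuine gaps.

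The first gap is that Lemma \ref{lem-2.2} requires $A,B,C$ to be \emph{real}, and in your reduction they are not. Carrying out the substitution, the $\zeta$- and $\zeta^2$-coefficients of the $\eta$-free part of $\Phi$, after dividing out the common factor $\tfrac12(4-c^2)(1-|x|^2)$, are $B=c(2b+x-1)$ and $C=\bar{x}$, and the constant term $A$ also depends on $x$ itself, not merely on $|x|$; here $x\in\overline{\mathbb{D}}$ is genuinely complex. So $Y(A,B,C)$ is not available as quoted, and your assertion that the resulting bound ``depends only on $c\in[0,2]$ and $|x|\in[0,1]$'' is false. (This issue did not arise in Theorem \ref{Th-2.1} only because $H_{2,1}(F_f/2)$ happens to be a polynomial in $\tau_2,\tau_3$ with coefficients that are real functions of $\tau_1$ alone --- an accident of that functional which does not recur for $\Phi$.) The second, decisive gap is that the entire content of the lemma --- the implication from the displayed sum-of-squares inequality to the bound $2$ --- is asserted rather than derived: you state that the final optimization ``collapses to precisely the hypothesis'' and explicitly defer checking this. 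That expectation is implausible as stated. The hypothesis is only a \emph{sufficient} condition; an exact branch-by-branch extremal analysis (even if $Y$ were applicable), with the four free parameters $\lambda,\beta,a,b$ entering $A,B,C$, would yield a piecewise-defined and strictly more permissive condition, after which you would still owe the nontrivial algebraic verification that the stated inequality implies it --- this is exactly the step your sketch omits. A smaller but symptomatic lapse: the degenerate case $c=2$ gives $\Phi=16\lambda+4a+8b-12\beta-2$, and bounding this by $2$ already requires the hypothesis on the parameters, so it is not a ``free'' direct check as claimed.
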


Now, we obtain the following result \textit{i.e.}, a sharp bound for the Zalcman functional in the case where $n =3$ for the class $\mathcal{S}^*_G$.
\begin{thm}
	Let $f(z)=z+a_2z^2+a_3z^3+\cdots\in\mathcal{S}^*_G$. Then we have 
	\begin{align*}
		|a_3^2-a_5|\leq \frac{1}{8}.
	\end{align*}
	The inequality is sharp.
\end{thm}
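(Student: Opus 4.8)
The plan is to reduce the functional $a_3^2-a_5$ to a polynomial in the Carath\'eodory coefficients $c_1,c_2,c_3,c_4$ and then match it to the form appearing in Lemma \ref{lem-4.1}. Since $f\in\mathcal{S}^*_G$, I would again write $zf'(z)/f(z)=\Psi(\omega(z))$ with $p=(1+\omega)/(1-\omega)\in\mathcal{P}$, so that the coefficient identities \eqref{eq-2.11} are available. Squaring the expression for $a_3$ and subtracting that for $a_5$, then clearing the common denominator $11520$, I expect to obtain
\begin{align*}
	a_3^2-a_5=\frac{1}{11520}\left(91c_1^4+510c_2^2+600c_1c_3-545c_1^2c_2-720c_4\right).
\end{align*}

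Next I would factor out $720/11520=1/16$ so the bracket takes exactly the shape $\lambda c_1^4+ac_2^2+2bc_1c_3-\tfrac32\beta c_1^2c_2-c_4$ of Lemma \ref{lem-4.1}, reading off
\begin{align*}
	\lambda=\frac{91}{720},\qquad a=\frac{17}{24},\qquad b=\frac{5}{12},\qquad \beta=\frac{109}{216}.
\end{align*}
Both $a$ and $b$ lie in $(0,1)$, so the range hypotheses hold, and the task reduces to verifying the single admissibility inequality
\begin{align*}
	8a(1-a)\{(b\beta-2\lambda)^2+(b(a+b)-\beta)^2\}+b(1-b)(\beta-2ab)^2\leq 4ab^2(1-a)(1-b)^2.
\end{align*}
With these rational values the left side evaluates to about $0.0069$ and the right side to about $0.0488$, so the inequality holds comfortably; in the write-up I would carry this out exactly over a common denominator rather than numerically. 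Granted the inequality, Lemma \ref{lem-4.1} gives $|\lambda c_1^4+ac_2^2+2bc_1c_3-\tfrac32\beta c_1^2c_2-c_4|\leq 2$, whence $|a_3^2-a_5|\leq\tfrac{1}{16}\cdot 2=\tfrac18$.

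The main obstacle I anticipate is purely computational rather than conceptual: first, correctly expanding $a_3^2-a_5$ (in particular tracking the cross term $-6c_1^2c_2$ produced by squaring $a_3$ and merging it with the $-425c_1^2c_2$ coming from $a_5$); and second, checking the admissibility inequality exactly, since $\lambda,a,b,\beta$ carry fairly large denominators and $b\beta-2\lambda$ is a delicate difference of comparable numbers. Once the quadruple $(\lambda,a,b,\beta)$ is pinned down, the entire content lies in confirming that it meets the hypothesis of Lemma \ref{lem-4.1}.

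For sharpness, I would exhibit the function
\begin{align*}
	f_4(z)=z\exp\left(\int_0^z\frac{\Psi(t^4)-1}{t}\,dt\right)=z+\frac18 z^5+\cdots,
\end{align*}
which belongs to $\mathcal{S}^*_G$ and corresponds to $\omega(z)=z^4$, that is, to $p(z)=(1+z^4)/(1-z^4)$ with $c_1=c_2=c_3=0$ and $c_4=2$. For this choice $a_3=0$ and $a_5=1/8$, so that $|a_3^2-a_5|=1/8$, which demonstrates that the bound is best possible.
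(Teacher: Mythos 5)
Your proposal is correct and follows essentially the same route as the paper: the same reduction of $a_3^2-a_5$ via the coefficient formulas to the form $\tfrac{1}{16}\bigl|\lambda c_1^4+ac_2^2+2bc_1c_3-\tfrac32\beta c_1^2c_2-c_4\bigr|$ with $\lambda=\tfrac{91}{720}$, $a=\tfrac{17}{24}$, $b=\tfrac{5}{12}$, $\beta=\tfrac{109}{216}$, the same appeal to the Ravichandran--Verma lemma (the paper verifies the admissibility inequality exactly, obtaining the difference $-\tfrac{253483853}{6046617600}<0$, consistent with your numerics), and the same extremal function $f_4$ with $\omega(z)=z^4$.
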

\begin{proof}
	Since $f\in\mathcal{S}^*_G$, in view of \eqref{eq-2.10}, we obtain that
	\begin{align}\label{eq-4.1}
		|a_3^2-a_5|=\frac{1}{16}\bigg|\lambda c_1^4+ac_2^2+2bc_1c_3-\frac{3}{2}\beta c_1^2c_2-c_4\bigg|,
	\end{align}
	where $\lambda=91/720$, $a=17/24$, $b=5/12$, and $\beta=109/216$. A tedious computation yields that
	\begin{align*}
		8a(1-a)\{(b\beta-2\lambda)^2+(b(a+b)-\beta)^2\}&+b(1-b)(\beta-2ab)^2-4ab^2(1-a)(1-b)^2\\&=-\frac{253483853}{6046617600}<0.
	\end{align*}
	Also, it is easy to see that $a=17/24<1$ and $b=5/12<1$. Thus, the conditions of Lemma \ref{lem-4.1} are satisfied. In view of Lemma \ref{lem-4.1}, we have
	\begin{align*}
		\bigg|\lambda c_1^4+ac_2^2+2bc_1c_3-\frac{3}{2}\beta c_1^2c_2-c_4\bigg|\leq 2.
	\end{align*}
	Hence, from \eqref{eq-4.1}, it follows that
	\begin{align*}
		|a_3^2-a_5|\leq \frac{1}{8}.
	\end{align*}
	Thus the inequality in the result is obtained.\vspace{2mm}
	
	To show that the inequality is sharp, we consider the function $f_4$ given by 
	\begin{align*}
		f_4(z)=z\exp\left(\int_{0}^{z}\frac{\Psi(t^4)-1}{t}dt\right)=z+\frac{1}{8}z^5+\cdots.
	\end{align*}
	We observe that $a_2=a_3=a_4=0$ and $a_5=\frac{1}{8}$. It is evident that $|a_3^2-a_5|=|-\frac{1}{8}|=\frac{1}{8}$. This completes the proof.
\end{proof}
In $1999$, Ma (see \cite{Ma-JMAA-1999}) generalized the  Zalcman conjecture as: for $f\in\mathcal{S}$, $|a_{n} a_{m} -a_{m+n-1}|\leq (n-1)(m-1)$; $n,m\in \mathbb{N}\setminus\{1\}$. In recent years there has been a great deal of attention devoted to finding sharp bounds of the Zalcman functional $J_{2,3}:=a_2 a_3 -a_4$ for several class of functions (see \cite{Lecko-Sim-RM-2019,Ravichandran- Verma-JMAA-2017,Krushkal-GMJ-2010} and references therein). Now we compute the sharp bounds of the generalized Zalcman functional $J_{2,3}:=a_2 a_3 -a_4$ for the  class $\mathcal{S}^{*}_{G}$ being a special case of the generalized Zalcman functional $J_{n,m}:=a_n a_m -a_{n+m-1}$, $n,m\in\mathbb{N}\setminus\{1\}$, which was investigated by Ma in \cite{Ma-JMAA-1999} for $f\in\mathcal{S}$. The following lemma which will play a key role in proving the sharp inequality. \vspace{1.2mm} 

\begin{lem}(\cite{Ali-BMMSS-2001})\label{lem-4.2}
	Let $p\in\mathcal{P}$ be given by \eqref{eq-2.1} with $0\leq B\leq 1$ and $B(2B-1)\leq D\leq B$. Then 
	\begin{align*}
		|c_3 -2Bc_1 c_2 +Dc^3_1|\leq 2.
	\end{align*}
\end{lem}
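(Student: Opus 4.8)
The plan is to reduce $\Lambda:=c_3-2Bc_1c_2+Dc_1^3$ to the Libera--Zlotkiewicz parametrization of Lemma \ref{lem-2.1} and then dispose of the free parameters by the Cho--Kim--Sugawa estimate of Lemma \ref{lem-2.2}. First I would observe that $|\Lambda|$ is rotation invariant: under $p\mapsto p(e^{i\theta}z)$ one has $c_n\mapsto e^{in\theta}c_n$, so each of $c_3$, $c_1c_2$, $c_1^3$ acquires the common factor $e^{3i\theta}$. Hence I may assume $c_1\geq 0$ and substitute \eqref{eq-2.4}--\eqref{eq-2.6}. Collecting terms, a direct computation gives
\begin{align*}
\Lambda=2\tau_1^3(1-4B+4D)+4\tau_1(1-\tau_1^2)(1-2B)\tau_2-2\tau_1(1-\tau_1^2)\tau_2^2+2(1-\tau_1^2)(1-|\tau_2|^2)\tau_3,
\end{align*}
with $\tau_1\in[0,1]$ and $\tau_2,\tau_3\in\overline{\mathbb{D}}$.

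Next I would settle the two boundary values of $\tau_1$. If $\tau_1=0$ then $\Lambda=2(1-|\tau_2|^2)\tau_3$, so $|\Lambda|\leq 2$ at once. If $\tau_1=1$ then $\Lambda=2(1-4B+4D)$, and the bound $|\Lambda|\leq 2$ is equivalent to $0\leq B-D\leq \tfrac12$. The left inequality is immediate from $D\leq B$, while the right follows because $D\geq B(2B-1)$ forces $B-D\leq 2B(1-B)\leq\tfrac12$ for $B\in[0,1]$. Thus the hypotheses on $B$ and $D$ are exactly what make the endpoint case work, and they already pinpoint where equality can arise (namely $B=\tfrac12$, $D=0$).

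For the remaining range $\tau_1\in(0,1)$ I would apply the triangle inequality, use $|\tau_3|\leq 1$, and factor out $2(1-\tau_1^2)$ to obtain
\begin{align*}
|\Lambda|\leq 2(1-\tau_1^2)\Big(|A_0+B_0\tau_2+C_0\tau_2^2|+1-|\tau_2|^2\Big)\leq 2(1-\tau_1^2)\,Y(A_0,B_0,C_0),
\end{align*}
where $A_0=\tau_1^3(1-4B+4D)/(1-\tau_1^2)$, $B_0=2\tau_1(1-2B)$, $C_0=-\tau_1$, and $Y$ is the quantity of Lemma \ref{lem-2.2}. The goal then reduces to showing $(1-\tau_1^2)\,Y(A_0,B_0,C_0)\leq 1$ on $(0,1)$. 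Since $C_0<0$, the sign of $A_0C_0$ is opposite to that of $1-4B+4D$, so one invokes part (i) or part (ii) of Lemma \ref{lem-2.2} accordingly and runs through the associated magnitude subcases. The main obstacle is precisely this case analysis: in each branch the closed form for $Y$ yields a rational or radical expression in $\tau_1$ whose product with $1-\tau_1^2$ must be checked not to exceed $1$, and it is here that the constraints $0\leq B\leq 1$ and $B(2B-1)\leq D\leq B$ are used to control $A_0$ (via $8B^2-8B+1\leq 1-4B+4D\leq 1$) and $B_0$ (via $|1-2B|\leq 1$). Combining the three cases then gives $|\Lambda|\leq 2$, as claimed.
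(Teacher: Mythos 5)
You should first note a mismatch in framing: the paper does not prove Lemma \ref{lem-4.2} at all — it is quoted from Ali \cite{Ali-BMMSS-2001}, whose original argument is an elementary direct maximization (triangle inequality in the Libera--Zlotkiewicz variables $c=c_1\in[0,2]$ and $t=|x|\in[0,1]$, followed by a branch-by-branch optimization over $(c,t)$ using the hypotheses on $B$ and $D$), and in particular does not use the Cho--Kim--Sugawa function $Y$ of Lemma \ref{lem-2.2}, which postdates it. That said, your setup is correct as far as it goes: the rotation invariance permitting $c_1\geq 0$, the parametrized form of $\Lambda$, the endpoint cases $\tau_1\in\{0,1\}$ (where $0\leq B-D\leq\tfrac12$ is exactly what is needed), and the reduction of the interior case to proving $(1-\tau_1^2)\,Y(A_0,B_0,C_0)\leq 1$ are all accurate.

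The genuine gap is that your proof stops precisely where the substance lies: the verification of $(1-\tau_1^2)Y(A_0,B_0,C_0)\leq 1$ is never carried out, and the route you indicate for it — controlling $A_0$ via $8B^2-8B+1\leq 1-4B+4D\leq 1$ and $B_0$ via $|1-2B|\leq 1$ — fails if taken at face value. Since $Y(A,B,C)\leq Y(|A|,|B|,|C|)$, magnitude control amounts to estimating $Y$ at the triple $\left(\tau_1^3/(1-\tau_1^2),\,2\tau_1,\,\tau_1\right)$; there $AC\geq 0$, so part (i) of Lemma \ref{lem-2.2} applies, and for $\tau_1\geq\tfrac12$ (where $|B|\geq 2(1-|C|)$) it yields $(1-\tau_1^2)Y\leq \tau_1^3+3\tau_1(1-\tau_1^2)=3\tau_1-2\tau_1^3$, which equals $\sqrt{2}>1$ at $\tau_1=1/\sqrt{2}$. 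Moreover, both magnitude bounds are attained simultaneously at the admissible pair $B=D=0$, where a direct computation with the signed coefficients gives $(1-\tau_1^2)Y(A_0,B_0,C_0)=1$ identically in $\tau_1$ (consistent with $|\Lambda|=|c_3|\leq 2$ being sharp): the inequality you must prove holds with equality along an entire edge of the parameter region, so there is no slack, and the sign structure — in particular $C_0=-\tau_1<0$ against $A_0$ of either sign — must be retained throughout a genuine multi-parameter case analysis in $(\tau_1,B,D)$ across all branches of Lemma \ref{lem-2.2}. As written, your argument is a correct reduction plus an unproved finishing step whose suggested shortcut is demonstrably insufficient; the essential work of the lemma is missing.
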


We obtain the following result concerning the sharp bound for the generalized Zalcman functional $J_{2,3}$ for the class $\mathcal{S}^{*}_{G}$.
\begin{thm}
	Let $f(z)=z+a_2z^2+a_3z^3+\cdots\in\mathcal{S}^*_G$. Then we have 
	\begin{align*}
		|a_2a_3-a_4|\leq \frac{1}{6}.
	\end{align*}
	The inequality is sharp.
\end{thm}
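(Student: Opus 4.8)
The plan is to follow the same template that worked for the previous Zalcman-type theorems: express the functional $a_2a_3-a_4$ in terms of the Carath\'eodory coefficients $c_1,c_2,c_3$ using the coefficient formulas \eqref{eq-2.11}, then match the resulting expression against the linear functional $c_3-2Bc_1c_2+Dc_1^3$ controlled by Lemma \ref{lem-4.2}. First I would substitute $a_2=c_1/4$, $a_3=(3c_2-c_1^2)/24$, and $a_4=(4c_1^3-19c_1c_2+24c_3)/288$ into $a_2a_3-a_4$ and simplify. Collecting terms, the product $a_2a_3=\frac{c_1}{4}\cdot\frac{3c_2-c_1^2}{24}=\frac{3c_1c_2-c_1^3}{96}$, so after putting everything over a common denominator I expect
\begin{align*}
	a_2a_3-a_4=\frac{1}{288}\left(9c_1c_2-3c_1^3-4c_1^3+19c_1c_2-24c_3\right)=-\frac{1}{288}\left(24c_3-28c_1c_2+7c_1^3\right),
\end{align*}
which I would rewrite (pulling out the factor $24$ so the leading $c_3$ coefficient is normalized to $1$) as $|a_2a_3-a_4|=\frac{1}{12}\left|c_3-2Bc_1c_2+Dc_1^3\right|$ for suitable constants $B$ and $D$. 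Reading off the coefficients gives $2B=28/24=7/6$, so $B=7/12$, and $D=7/24$.

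The central step is then to verify that these constants satisfy the hypotheses of Lemma \ref{lem-4.2}, namely $0\le B\le 1$ and $B(2B-1)\le D\le B$. With $B=7/12$ the first condition is immediate. For the second, $B=7/12$ gives $2B-1=1/6$, so $B(2B-1)=7/72$; I would check $7/72\le 7/24$ (true, since $7/24=21/72$) and $7/24\le 7/12$ (true). Granting this, Lemma \ref{lem-4.2} yields $|c_3-2Bc_1c_2+Dc_1^3|\le 2$, whence $|a_2a_3-a_4|\le \frac{1}{12}\cdot 2=\frac{1}{6}$, the asserted bound.

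For sharpness I would look for an extremal function analogous to $f_1,f_2,f_3,f_4$, most likely built from $\Psi(t^3)$. Following the pattern, the candidate is
\begin{align*}
	f_5(z)=z\exp\left(\int_{0}^{z}\frac{\Psi(t^3)-1}{t}\,dt\right),
\end{align*}
whose power-series expansion should produce $a_2=a_3=0$ and a nonzero $a_4$ equal in modulus to $1/6$ (since $\Psi$ has leading correction term $z/2$, the substitution $t\mapsto t^3$ places the first nontrivial coefficient at $z^4$, forcing $a_4=\pm1/6$); then $|a_2a_3-a_4|=|a_4|=1/6$ exactly. The main obstacle I anticipate is not the verification of Lemma \ref{lem-4.2}'s hypotheses, which is a short arithmetic check, but rather the bookkeeping in the initial substitution: one must be careful that the numerical coefficients $7/12$ and $7/24$ emerge correctly, since an error in simplifying $a_2a_3-a_4$ would either push $(B,D)$ outside the admissible region $B(2B-1)\le D\le B$ (making the lemma inapplicable) or produce a wrong constant in front. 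A secondary subtlety is confirming the exact expansion of $f_5$ to the $z^4$ term to certify sharpness, which requires expanding $\exp$ of the integral through fourth order.
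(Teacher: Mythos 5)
Your proposal is correct and follows exactly the paper's own proof: the same reduction $|a_2a_3-a_4|=\tfrac{1}{12}|c_3-2Bc_1c_2+Dc_1^3|$ with $B=7/12$, $D=7/24$, the same application of Lemma \ref{lem-4.2} after checking $7/72\le 7/24\le 7/12$, and the same extremal function $f_5(z)=z\exp\bigl(\int_0^z(\Psi(t^3)-1)t^{-1}\,dt\bigr)=z+\tfrac{1}{6}z^4+\cdots$. All the arithmetic checks out, so there is nothing to correct.
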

\begin{proof}
	Since $f\in\mathcal{S}^*_G$, in view of \eqref{eq-2.10}, we have 
	\begin{align*}
		|a_2a_3-a_4|=\frac{1}{12}|c_3-2Bc_1c_2+Dc_1^3|,
	\end{align*}
	where $B=7/12$ and $D=7/24$. It is easy to see that $0\leq B\leq 1$ and the inequality $B(2B-1)\leq D\leq B$ implies that
	\begin{align*}
		\frac{7}{72}\leq\frac{7}{24}\leq\frac{7}{12}.
	\end{align*}
	Hence, by Lemma \ref{lem-4.2}, 
	\begin{align*}
		|a_2a_3-a_4|\leq \frac{1}{6}.
	\end{align*}
	The desired inequality is obtained.\vspace{1.2mm}
	
	To show that the inequality is sharp, we consider the function $f_5$ from the class $\mathcal{S}^*_G$ as follows
	\begin{align*}
		f_5(z)=z\exp\left(\int_{0}^{z}\frac{\Psi(t^3)-1}{t}dt\right)=z+\frac{1}{6}z^4+\cdots.
	\end{align*}
	We observe that $a_2=0$ and $a_3=0$, while $a_4=\frac{1}{6}$. It is evident that $|a_2a_3 - a_4| = \frac{1}{6}$. This completes the proof.
\end{proof}
\vspace{2mm}
\noindent\textbf{Compliance of Ethical Standards:}\\

\noindent\textbf{Conflict of interest.} The authors declare that there is no conflict  of interest regarding the publication of this paper.\vspace{1.5mm}

\noindent\textbf{Data availability statement.}  Data sharing is not applicable to this article as no datasets were generated or analyzed during the current study.


\begin{thebibliography}{99}	
	
	\bibitem{Abdel-Thomas-PAMS-1992} {\sc H. R. Abdel-Gawad} and {\sc D. K. Thomas}, The Fekete-Szeg¨o problem for strongely closeto-convex functions, \textit{Proc. Amer. Math. Soc.} \textbf{114} (1992), no. 2, 345–349.
	
	\bibitem{Ali-BMMSS-2001} {\sc R. M. Ali}, Coefficients of the inverse of strongly starlike functions, \textit{Bull. Malays. Math. Sci. Soc.}, \textbf{26} (2003), 63–71.
	
	\bibitem{Ali-Allu-PAMS-2018} {\sc M. F. Ali} and {\sc V. Allu}, On logarithmic coefficients of some close-to-convex functions, \textit{Proc. Amer. Math. Soc.} \textbf{146} (2018), 1131–1142. 
	
	\bibitem{Ali-Allu-Thomas-CRMCS-2018} {\sc M. F. Ali, V. Allu} and {\sc D. K. Thomas}, On the third logarithmic coefficients of close- to- convex functions, \textit{Curr. Res. Math. Comput. Sci. II}, Publisher UWM, Olsztyn, (2018) 271–278.
	
	\bibitem{Berezin-Zhidkov-1965} {\sc I. S. Berezin} and {\sc N. P. Zhidkov}, Computing Methods, Pergamon (1965).
	
	\bibitem{Branges-AM-1985} {\sc L. de Branges}, A proof of the Bieberbach conjecture, \textit{Acta Math.} \textbf{154} (1985), 137-152.
	
	\bibitem{Cho-Kim-Sugawa-JMSJ-2007} {\sc N. E. Cho, Y. C. Kim} and {\sc T. Sugawa}, A general approach to the Fekete-Szego problem, \textit{J. Math. Soc. Japan.} \textbf{59} (2007), 707-727.
	
	\bibitem{Cho-Kowalczyk-kwon-Lecko-Sim-RACSAM-2020} {\sc N. E. Cho, B. Kowalczyk, O. S. Kwon, A. Lecko} and {\sc Y. J. Sim}, On the third logarithmic coefficient in some subclasses of close-to-convex functions, \textit{Rev. R. Acad. Cienc. Exactas Fís. Nat.(Esp.)} \textbf{114}, Art: 52, (2020), 1–14.
	
	\bibitem{Deniz-BMMSS-2021} {\sc E. Deniz}, Sharp coefficients bounds for starlike functions associated with generalized telephone numbers, \textit{Bull. Malaysian Math. Sci. Soc.} \textbf{44}(3) (2021), 1525–1542.
	
	\bibitem{Duren-1983-NY}{\sc P. T. Duren}, Univalent Functions. \textit{Springer-Verlag}, New York Inc (1983).
	
	\bibitem{Elin-Jaco-RM-2022} {\sc M. Elin} and {\sc F. Jacobzon}, Note on the Fekete-Szeg$ \ddot{o} $ problem for spirallike mappings in Banach spaces, \textit{Results Math.} \textbf{77}(3) (2022) 137, 6 pp.
	
	\bibitem{Fekete-Szego_JLMS-1933} {\sc M. Fekete} and {\sc G. Szeg$ \ddot{o}$}, Eine Bemerkunguber ungerade schlichte Funktionen, \textit{J. Lond. Math. Soc.} \textbf{8}(1933), 85-89.
	
	\bibitem{Girela-AASF-2000} {\sc D. Girela}, Logarithmic coefficients of univalent functions, \textit{Ann. Acad. Sci. Fenn.} \textbf{25} (2000), 337–350. 
	
	\bibitem{Goel-Kumar-BMMSS-2020} {\sc P. Goel} and {\sc S. Sivaprasad Kumar}, Certain class of starlike functions associated with modified sigmoid function, \textit{Bull. Malays. Math. Sci. Soc.} \textbf{43} (2020), no. 1, 957–991.
	
	\bibitem{Goodman-1983} {\sc A. W. Goodman}, Univalent Functions (Mariner, Tampa, FL, 1983).
	
	\bibitem{Goodman-APM-1991} {\sc A. W. Goodman}, On uniformly convex functions, \textit{Annales Polonici Mathematici} \textbf{56}(1) (1991), 87-92.
	
	\bibitem{Janowski-APM-1973} {\sc W. Janowski}, Some extremal problems for certain families of analytic functions I, \textit{Ann. Polon. Math.} \textbf{28}(3) (1973), 297–326.
	
	\bibitem{Kazimoglu-Deniz-Srivastava-COAT-2024} {\sc S. Kazımoğlu, E. Deniz} and {\sc H. M. Srivastava}, Sharp Coefficients Bounds for Starlike Functions Associated with Gregory Coefficients, \textit{Complex Anal. Oper. Theory} \textbf{18}, 6 (2024).
	
	\bibitem{Keogh-Merkes-PAMS-1969} {\sc F. R. Keogh} and {\sc E. P. Merkes}, A coefficient inequality for certain classes of analytic functions, \textit{Proc. Amer. Math. Soc.} \textbf{20} (1969), 8–12.
	
	\bibitem{Koepf-PAMS-1987} {\sc W. Koepf}, On the Fekete-Szegö problem for close-to-convex functions, \textit{Proc. Amer. Math. Soc.} \textbf{101} (1987) 89–95.
	
	\bibitem{Kowalczyk-Lecko-BAMS-2022} {\sc B. Kowalczyk} and {\sc A. Lecko}, Second Hankel determinant of logarithmic coefficients of convex and starlike functions, \textit{Bull. Aust. Math. Soc.} \textbf{105} (2022), no. 3, 458–467.
	
	\bibitem{Kowalczyk-Lecko-RACSAM-2023} {\sc B. Kowalczyk} and {\sc A. Lecko}, The second Hankel determinant of the logarithmic coefficients of strongly starlike and strongly convex functions,\textit{ Rev. Real Acad. Cienc. Exactas Fis. Nat. Ser. A-Mat.} \textbf{117}, 91 (2023). 
	
	\bibitem{Krushkal-JAM-1995} {\sc S. L. Krushkal}, Univalent functions and holomorphic motions, \textit{J. Analyse Math.} \textbf{66} (1995), 253–275.
	
	\bibitem{Krushkal-GMJ-2010} {\sc S. L. Krushkal}, Proof of the Zalcman conjecture for initial coefficients, \textit{Georgian Math. J.} \textbf{17} (2010), 663–681.
	
	\bibitem{Lecko-Sim-RM-2019} {\sc A. Lecko} and {\sc Y. J. Sim}, Coefficient Problems in the Subclasses of Close-to-Star Functions, \textit{Results Math.} \textbf{74}, 104 (2019).
	
	\bibitem{Libera-Zlotkiewicz-PAMS-1982} {\sc R. J. Libera} and {\sc E. J. Zlotkiewicz}, Early coefficients of the inverse of a regular convex function, \textit{Proc. Amer. Math. Soc.} \textbf{85} (1982), 225-230.
	
	\bibitem{Libera-Zlotkiewicz-PAMS-1983} {\sc R. J. Libera} and {\sc E. J. Zlotkiewicz}, Coefficient bounds for the inverse of a function with derivatives in $\mathcal{P}$, \textit{Proc. Amer. Math. Soc.} \textbf{87} (1983), 251-257.
	
	\bibitem{Ma-JMAA-1999} {\sc W. Ma}, Generalized Zalcman conjecture for starlike and typically real functions, \textit{J. Math. Anal. Appl.} \textbf{234}(1) (1999), 328–339.
	
	\bibitem{Ma-Minda-1994} {\sc W. C. Ma} and {\sc D. Minda}, A unified treatment of some special classes of univalent functions, in: \textit{Proceedings of the Conference on Complex Analysis}, International Press, Cambridge (1994), 157–169.
	
	\bibitem{Mandal-Ahamed-LMJ-2024} {\sc S. Mandal} and {\sc M.B. Ahamed}, Second Hankel determinant of logarithmic coefficients of inverse functions in certain classes of univalent functions, \textit{Lith. Math. J.} \textbf{64} (2024), 67–79.
	
	\bibitem{Mendiratta-Nagpal-Ravichandran-BMMSS-2015} {\sc R. Mendiratta, S. Nagpal} and {\sc V. Ravichandran}, On a subclass of strongly starlike functions associated with exponential function, \textit{Bull. Malaysian Math. Sci. Soc.} \textbf{38} (2015), 365–386.
	
	\bibitem{Milin-1977-ET}{\sc I. M. Milin}, Univalent Functions and Orthonormal Systems (Nauka, Moscow, 1971) (in Russian); English translation, Translations of Mathematical Monographs, 49 (\textit{American Mathematical Society}, Providence, RI, 1977).
	
	\bibitem{Phillips-AMM-1972} {\sc G. M. Phillips}, Gregory’s method for numerical integration, \textit{Amer. Math. Monthly} \textbf{79}(3) (1972), 270–274.
	
	\bibitem{Ponnusamy-Sugawa-BDSM-2021} {\sc S.Ponnusamy} and {\sc T. Sugawa}, Sharp inequalities for logarithmic coefficients and
	their applications, \textit{Bull. Sci. Math.} \textbf{166} (2021), 23.
	
	\bibitem{Ravichandran-Verma-CRMAS-2015} {\sc V. Ravichandran} and {\sc S. Verma}, Bound for the fifth coefficient of certain starlike functions, \textit{C. R. Math. Acad. Sci. Paris} \textbf{353} (2015),
	505–510.
	
	\bibitem{Ravichandran- Verma-JMAA-2017} {\sc V. Ravichandran} and {\sc S. Verma}, Generalized Zalcman conjecture for some classes
	of analytic functions, \textit{J. Math. Anal. Appl.} \textbf{450}(1) (2017), 592–605.
	
	\bibitem{Raza-Riza-Thomas-BAMS-2023} {\sc M. Raza, A. Riaz} and {\sc D. K. Thomas}, The third Hanekl determinant for inverse coefficients of convex functions, \textit{Bull. Aust. Math. Soc.} (2023), 1-7.
	
	\bibitem{Riza-Raza-Thomas-FM-2022} {\sc A. Riaz, M. Raza} and {\sc D. K. Thomas}, Hankel determinants for starlike and convex functions associated with sigmoid functions, \textit{Forum Math.}, \textbf{34} (2022), 137-156.
	
	\bibitem{Rønning-PAMS-1993} {\sc F. Rønning}, Uniformly convex functions and a corresponding class of starlike functions, \textit{Proc. Amer. Math. Soc.} \textbf{118}(1) (1993), 189–196.
	
	\bibitem{Roth-PAMS-2007} {\sc  O. Roth}, A sharp inequality for the logarithmic coefficients of univalent functions, \textit{Proc. Amer. Math. Soc.} \textbf{135}(2007), 2051-2054.
	
	\bibitem{Sim-Lecko-Thomas-AMPA-2021} {\sc Y.J. Sim, A. Lecko} and {\sc D.K. Thomas}, The second Hankel determinant for strongly convex and Ozaki close to convex functions, \textit{Ann. Mat. Pura Appl.} \textbf{200}(6) (2021) 2515-2533.
	
	\bibitem{Thomas-PAMS-2016} {\sc D. K. Thomas}, On logarithmic coefficients of close to convex functions, \textit{Proc. Amer. Math. Soc.} \textbf{144} (2016), 1681–1687.
	
	\bibitem{Xu-Jiang-Liu-JMAA-2023} {\sc Q. H. Xu, T. Jiang}, and {\sc T. Liu}, The refinement of Fekete and Szeg$ \ddot{o} $ problems for close-to-convex functions and close-to-quasi-convex mappings, \textit{J. Math. Anal. Appl.} \textbf{527}(2023), 127428.
	
	\bibitem{Ye-Lim-FCM-2016} {\sc K. Ye} and {\sc L.H. Lim}, Every matrix is a product of Toeplitz matrices,\textit{ Found. Comput. Math.} \textbf{16} (2016), no. 3, 577–598.
\end{thebibliography}
\end{document}